\newtheorem{theorem}{Theorem}
\newtheorem{lemma}{Lemma}
\newtheorem{remark}{Remark}
\newtheorem{example}{Example}
\newtheorem{proposition}{Proposition}
\newtheorem{definition}{Definition}
\newtheorem{proof}{Proof}
\newcommand{\SO}{\textnormal{SO}}
\newcommand{\SE}{\textnormal{SE}}
\newcommand{\so}{\mathfrak{so}}
\newcommand{\se}{\mathfrak{se}}
\newcommand{\R}{\mathbb{R}}
\newcommand{\Sp}{\mathbb{S}}
\newcommand{\s}{\mathfrak{s}}
\newcommand{\g}{\mathfrak{g}}
\newcommand{\ad}{\normalfont{\text{ad}}}
\newcommand{\Hor}{\mathcal{H}}
\newcommand{\h}{\mathfrak{h}}
\newcommand{\D}{\mathcal{D}}
\newcommand{\frakd}{\mathfrak{d}}
\begin{document}

\begin{frontmatter}

\title{Nonholonomic mechanics and virtual constraints on Riemannian homogeneous spaces} 


\author[ES]{Efstratios Stratoglou}, \ead{ef.stratoglou@alumnos.upm.es}
\author[AA]{Alexandre Anahory Simoes},  \ead{alexandre.anahory@ie.edu.es}    \author[AB]{Anthony Bloch}, \ead{abloch@umich.edu} 
\author[LC]{Leonardo J. Colombo} \ead{leonardo.colombo@car.upm-csic.es}

\address[ES]{Universidad Polit\'ecnica de Madrid (UPM), Jos\'e Guti\'errez Abascal, 2, 28006 Madrid, Spain.}
\address[AA]{IE School of Science and Technology, Paseo de la Castellana 259, Madrid - 28029 Madrid, Spain.}
\address[AB] {Department of Mathematics, University of Michigan, Ann Arbor, MI 48109, USA.} 
\address[LC]{Centre for Automation and Robotics (CSIC-UPM), Ctra. M300 Campo Real, Km 0,200, Arganda del Rey - 28500 Madrid, Spain.}
\thanks{The authors acknowledge financial support from Grant PID2022-137909NB-C21 funded by MCIN/AEI/ 10.13039/501100011033. A.B. was partially supported by NSF grant  DMS-2103026, and AFOSR grants FA
9550-22-1-0215 and FA 9550-23-1-0400.}
\begin{keyword}                           
Virtual constraints, Geometric Control, Nonholonomic systems, Affine connection control systems, Riemannian homogeneous spaces.         
\end{keyword}                             

\begin{abstract}                          
Nonholonomic systems are, so to speak, mechanical systems with a prescribed restriction on the velocities. A virtual nonholonomic constraint is a controlled invariant distribution associated with an affine connection mechanical control system. A Riemannian homogeneous space is, a Riemannian manifold that looks the same everywhere, as you move through it by the action of a Lie group. These Riemannian manifolds are not necessarily Lie groups themselves, but nonetheless possess certain symmetries and invariances that allow for similar results to be obtained. In this work, we
introduce the notion of virtual constraint on Riemannian homogeneous spaces in a geometric framework which is a generalization of the classical controlled invariant distribution setting and we show the existence and uniqueness of a control law preserving the invariant distribution. Moreover we characterize the closed-loop dynamics obtained using the unique control law in terms of an affine connection. We illustrate the theory with new examples of nonholonomic control systems inspired by robotics applications.
\end{abstract}

\end{frontmatter}

\section{Introduction}
In Euclidean spaces, simple mechanical systems are governed by the equations of motion of the type
\begin{equation*}
    \ddot{q} = -\frac{\partial V}{\partial q} + F(q,\dot{q}),
\end{equation*}
where $q$ denotes the position of the system in the Euclidean space, $V$ is the potential function and $F$ denotes some external forces acting on the system. The equations of motion above, which can be identified with Newton's law of motion, have two major limitations. The first one is that as soon as we change coordinates, the form of the equations of motion necessarily changes if the transformation is not linear. The second is that if our system does not evolve in an Euclidean configuration space, such as a collection of joints in a manipulator, the equations above are usually not valid or, not globally valid in the entire configuration space. These facts explain why several authors have rewritten the equations of motion of mechanical systems in a coordinate-free setting that avoids both these drawbacks.  Riemannian geometry has already been implemented in nonlinear control theory to present a coordinate-free setting for simple mechanical systems (see for instance \cite{bloch2003nonholonomic} and \cite{bullo2019geometric}).

Given a Riemannian manifold $Q$ equipped with a Riemannian metric $\mathcal{G}$, $\nabla$ the Levi-Civita connection associated to the Riemannian metric $\mathcal{G}$, a potential function $V:Q\rightarrow \R$ and a force map $Y:TQ\rightarrow TQ$, a simple mechanical system is described by the following coordinate-free equations of motion
\begin{equation}\nabla_{\dot{q}} \dot{q} = -\text{grad }V + Y(q, \dot{q}).\end{equation}
where \text{grad} denotes the gradient vector field and $Y(q, \dot{q})$ encodes the external force. The previous coordinate-free equations translate how Newton's second law should be adapted in the setting of a Riemannian manifold: the covariant acceleration should be equal to the vector field corresponding to the forces acting on the system.

Nonholonomic systems are, roughly speaking, mechanical systems with constraints on their velocity that are not derivable from position constraints \cite{ne_mark2004dynamics}. They arise, for instance, in systems that have rolling contact (e.g., the rolling of wheels without slipping) or certain kinds of sliding contact (such as the sliding of skates). There are multiple applications in the context of wheeled motion, mobile robotics and robotic manipulation. Nonholonomic systems on Riemannian manifolds were studied in \cite{bloch1995nonholonomic} (see also \cite{bloch2003nonholonomic}).

A Riemannian homogeneous space (see \cite{helgason1979differential} for instance) is a Riemannian manifold that looks the same everywhere, as you move through it by the action of a Lie group. These Riemannian manifolds are not necessarily Lie groups themselves, but nonetheless possess certain symmetries and invariances that allow for similar results to be obtained. That is, by taking adventage of the symmetries we can simplify the dynamics of the system. One of the objectives of this paper is to study nonholonomic systems in a coordinate-free setting on Riemannian homogeneous spaces. Nonholonomic systems on Riemannian homogeneous spaces have not been considered in the literature before, but nevertheless some examples have been considered in the context of geometric control such as a sphere rolling on another sphere in \cite{jurdjevic1997geometric} (see also \cite{bloch2003nonholonomic} Section $7.4$, \cite{rojo2010rolling}), but a detailed geometric description of the dynamics of such systems has not been analyzed.

Virtual constraints are relations among the links of the
mechanism that are dynamically imposed through feedback
control. Their function is to coordinate the evolution of the
various links throughout a single variable—which is another
way of saying that they reduce the degrees of freedom—with
the goal of achieving a closed-loop mechanism whose
dynamic behaviour is fully determined by the evolution of
simplest lower-dimension system (see \cite{canudas2004concept} and \cite{westervelt2018feedback} for instance for an overview on virtual constraints).  Virtual constraints extend the application of zero dynamics to feedback design (see e.g.,\cite{isidori1985nonlinear}). Virtual holonomic constraints have been studied over the past years in different contexts, such as motion planning and control \cite{freidovich2008periodic}, \cite{mohammadi2018dynamic}, and biped
locomotion to achieve a desired walking gait \cite{chevallereau2003rabbit}, \cite{westervelt2003hybrid}. Virtual holonomic constraints on Riemannian homogeneous spaces have been explored in \cite{stratoglou2024virtualc}.

Virtual nonholonomic constraints are a class of virtual constraints that depend on velocities rather than only on the configurations of the system. Virtual constraints were introduced in \cite{griffin2015nonholonomic} to design a velocity-based swing foot placement in bipedal robots.  The work \cite{moran2021energy} (see also \cite{moran2023gymnastics}) introduces an approach to defining rigorously virtual nonholonomic constraints, but it is not set in the most appropriate geometric setting to study this kind of constraint: that of tangent bundles. 

In particular, a virtual nonholonomic constraint is described by a non-integrable distribution on the configuration manifold of the system for which there is a feedback control making it invariant under the flow of the closed-loop system. In \cite{Simoes:linear:nonholonomic} we provided sufficient conditions for the existence and uniqueness of such a feedback law defining the virtual linear nonholonomic constraint and we also characterize the trajectories of the closed-loop system as solutions of a mechanical system associated with an induced constrained connection. Moreover, we were able to produce linear nonholonomic dynamics by imposing virtual nonholonomic constraints on a mechanical control system. We extended the results to affine and nonlinear nonholonomic constraints in \cite{stratoglou2023bvirtual} and \cite{stratoglou2023geometry}, respectively. We also studied the design of virtual nonholonomic constraints on Lie groups in \cite{stratoglou2023virtual}. This paper goes one step further and studies the design of virtual nonholonomic constraints on Riemannian homogeneous spaces. The second objective of this paper is to show the existence and uniqueness of a control law preserving the invariant distribution. Moreover, in this paper, we also characterize the closed-loop nonholonomic dynamics obtained using the unique control law in terms of an affine connection, and we illustrate the theory with new examples of nonholonomic control systems inspired by robotics applications, in particular, inspired by a manipulator arm equipped with a knife cutting a spherical surface.

In \cite{stratoglou2024virtualc}, we have studied virtual holonomic constraints on Riemannian homogeneous spaces. In this paper, we work on virtual nonholonomic constraints. The main challenge in considering this class of constraints is the geometry behind the problem. In particular, we need to develop a mathematical theory for nonholonomic systems on Riemannian homogeneous spaces that has not yet been considered in the literature, as far as we know. Moreover, we developed new examples of such systems that were never studied neither from the modeling point of view nor in the control literature. Finally, there are many technical differences worth mentioning between \cite{stratoglou2024virtualc} and the present work. To mention some, one is related to the geometry of the constraints, which is now determined by a distribution rather than by a submanifold of the configuration manifold. Another difference is related with the fact that we drop the $\mathfrak{h}$-connection appearing in \cite{stratoglou2024virtualc}, since Riemannian geodesics on the Lie group $G$ acting on the homogeneous manifold are tangent to the horizontal distribution and, thus, projection to the horizontal bundle is not needed. Another major difference with the other paper is that we examine the reconstruction procedure and we prove that the closed-loop system on the homogeneous manifold is equivalent to a closed-loop system in the Lie algebra associated with the Lie group acting on the homogeneous manifold.

The remainder of the paper is structured as follows. Section \ref{Sec: background} introduces nonholonomic systems in a coordinate-free setting, with special emphasis on nonholonomic. systems on Lie groups. We study the dynamics of mechanical systems on Riemannian homogeneous spaces in Section \ref{sec3}, in particular the class of nonholonomic systems on Riemannian homogeneous spaces. We define virtual nonholonomic constraints in Section \ref{sec4}, where we provide sufficient conditions for the existence and uniqueness of a control law defining a virtual nonholonomic constraint.  Moreover, we introduce a constrained connection to characterize the closed-loop dynamics as a solution of the mechanical system associated with such a constrained connection. We study two particular applications in Sections \ref{ex1} and \ref{ex2}.




\section{Nonholonomic systems in a coordinate-free setting}\label{Sec: background}
\subsection{Background on Riemannian manifolds}

Let $Q$ be an $n$-dimensional differentiable manifold equipped with a Riemannian metric $\left< \cdot, \cdot \right>$, i.e., a positive-definite symmetric covariant 2-tensor field. That is, to each point $q\in Q$ we assign a positive-definite inner product $\left<\cdot, \cdot\right>_q:T_qQ\times T_qQ\to\mathbb{R}$, where $T_qQ$ is the \textit{tangent space} of $Q$ at $q\in Q$ and $\left<\cdot, \cdot\right>_q$ varies smoothly with respect to $q$. We denote by $\tau_{q}:T_{q}Q \rightarrow Q$ the smooth projection assigning to each tangent vector $v_{q}$ the point $q$ at which the vector is tangent. The length of a tangent vector is defined as
$\|v_q\|=\langle v_q,v_q\rangle^{1/2}$ with $v_q\in T_q Q$. We denote by $T^{*}_{q}Q$, the dual space of $T_{q} Q$ composed of the $\R$-valued linear maps $T_{q}Q\to\R$. For any $q \in Q$, the Riemannian metric induces an invertible map $\flat: T_q Q \to T_q^\ast Q$, called the \textit{flat map}, defined by $\flat(X)(Y) = \left<X, Y\right>$ for all $X, Y \in T_q Q$. The inverse map $\sharp: T_q^\ast Q \to T_q Q$, called the \textit{sharp map}, is similarly defined implicitly by the relation $\left<\sharp(\alpha), Y\right> = \alpha(Y)$ for all $\alpha \in T_q^\ast Q$. Let $C^{\infty}(Q)$ and $\Gamma(TQ)$ denote the spaces of smooth scalar fields and smooth vector fields on $Q$, respectively. The gradient of a function on a Riemannian manifold is given by $\text{grad } f(p) = \sharp(df(q))$ for all $q \in Q$. The push-forward of the function $f$ will be denoted by $f_*$. 

{A vector bundle of rank $k>0$ on the manifold $Q$ is a smooth assignment to each point $q\in Q$ of a vector space with dimension $k$. Relevant particular cases for us are tangent bundles, where to each point $q$ we assign the tangent space at $q$; and distributions, in which the vector space is a subspace of the tangent space. If $P$ is a vector bundle on $Q$, then $P$ might be written as a collection of vector spaces $P=\cup_{q\in Q} P_{q}$, where $P_{q}$ is the $k$-dimensional space assigned to the point $q$. In addition, there is a map called the bundle projection and denoted by $\pi:P\to Q$, defined by $\pi(p)=q$ if $p\in P_{q}$. Smooth sections of a vector bundle $P$ on $Q$ are smooth maps $X:Q\to P$ having the property that $X(q)\in P_{q}$, i.e., the vector $X(q)$ must belong to the vector space assigned to $q$ for all $q\in Q$. We denoted the collection of smooth section of a vector bundle $P$ by $\Gamma(P)$.} Vector fields are a special case of smooth sections of vector bundles. In particular, they are smooth maps of the form $X: Q \to TQ$ such that $\tau_{Q} \circ X = \text{id}_Q$, the identity function on $Q$. The set of vector fields on $Q$ will be denoted by $\mathfrak{X}(Q)$. For a vector field $X\in \mathfrak{X}(Q)$ we define its vertical lift to $TQ$ to be $\displaystyle{X_{v_{q}}^{V}=\left. \frac{d}{dt}\right|_{t=0} (v_{q} + t X(q))}$, where $v_q\in T_qQ$. Note that $X^V\in\mathfrak{X}(TQ)$.

An \textit{affine connection} on $Q$ is a map $\tilde{\nabla}: \mathfrak{X}(Q) \times \mathfrak{X}(Q) \to \mathfrak{X}(Q)$ which is $C^{\infty}(Q)$-linear in the first argument, $\R$-linear in the second argument, and satisfies the product rule $\tilde{\nabla}_X (fY) = X(f) Y + f \tilde{\nabla}_X Y$ for all $f \in C^{\infty}(Q), \ X \in \mathfrak{X}(Q), \ Y \in \mathfrak{X}(Q)$. The connection plays a role similar to that of the directional derivative in classical real analysis. The operator
$\tilde{\nabla}_{X}$ which assigns to every smooth section $Y$ the vector field $\tilde{\nabla}_{X}Y$ is called the \textit{covariant derivative} (of $Y$) \textit{with respect to $X$}.


Let $q: I \to Q$ be a smooth curve parameterized by $t \in I \subset \R$, and denote the set of smooth vector fields along $q$ by $\Gamma(q)$. Then for any affine connection $\tilde{\nabla}$ on $Q$, there exists a unique operator $\tilde{\nabla}_{\dot{q}}: \Gamma(q) \to \Gamma(q)$ (called the \textit{covariant derivative along $q$}) which agrees with the covariant derivative $\tilde{\nabla}_{\dot{q}}\tilde{W}$ for any extension $\tilde{W}$ of $W$ to $Q$. A vector field $X \in \Gamma(q)$ is said to be \textit{parallel along $q$} if $\displaystyle{\tilde{\nabla}_{\dot{q}} X\equiv 0}$. 
The covariant derivative allows to define a particularly important family of smooth curves on $Q$ called \textit{geodesics}, which are defined as the smooth curves $q$ satisfying $\tilde{\nabla}_{\dot{q}} \dot{q} = 0$. 

The Riemannian metric induces a unique torsion-free and metric compatible connection called the \textit{Riemannian connection}, or the \textit{Levi-Civita connection} (see \cite{boothby1986introduction}). Along the rest of the paper, we will assume that $\tilde{\nabla}$ is the Riemannian connection. 


In the presence of a constraint distribution $\mathcal{D}$, we can define two
complementary orthogonal projectors ${\mathcal P}\colon TQ\to {\mathcal D}$ and ${\mathcal Q}\colon TQ\to {\mathcal
D}^{\perp},$ where $\mathcal{D}^\perp$ is the orthogonal complement of $\mathcal{D}$ i.e. for any $q\in Q$, $\mathcal{D}^\perp_g$ contains all vectors that are orthogonal to every vector in $\mathcal{D}_g$ with respect to the Riemannian metric. Thus, we can split the tangent bundle in the orthogonal decomposition $\mathcal{D}\oplus\mathcal{D}^{\perp}=TQ$. In what follows we will consider only non-integrable distributions.

Consider the \textit{nonholonomic connection} $\tilde{\nabla}^{nh}:\mathfrak{X}(Q)\times \mathfrak{X}(Q) \rightarrow \mathfrak{X}(Q)$ defined by (see \cite{bullo2019geometric} for instance)
\begin{equation}\label{nh:connection}
    \tilde{\nabla}^{nh}_X Y =\tilde{\nabla}_{X} Y + (\tilde{\nabla}_{X} \mathcal{Q})(Y).
\end{equation}

Next, consider  mechanical systems where the Lagrangian is of mechanical type, that is, mechanical systems with a dynamics described by a Lagrangian function $L:TQ\rightarrow\mathbb{R}$ defined by
\begin{equation}\label{mechanical:lagrangian}
    L(v_q)=\frac{1}{2}\left\langle v_q, v_q \right\rangle - V(q),
\end{equation}
with $v_q\in T_qQ$, where 
$V:Q\rightarrow\mathbb{R}$ is a (smooth) potential function, and also assume that the Lagrangian system is subject to the nonholonomic constraints given by the distribution $\mathcal{D}$. Then, the \textit{nonholonomic trajectories}, i.e., the trajectories of the nonholonomic mechanical system associated with the Lagrangian \eqref{mechanical:lagrangian} and the distribution $\mathcal{D}$ must satisfy the following equation
\begin{equation}\label{nonholonomic:mechanical:equation}
    \tilde{\nabla}^{nh}_{\dot{q}}\dot{q} + \mathcal{P}(\text{grad } V(q(t))) = 0.
\end{equation}

Here, the vector field $\text{grad }V\in\mathfrak{X}(Q)$ is characterized by $$\left\langle\text{grad } V, X \right\rangle = dV(X), \; \mbox{ for  every } X \in
\mathfrak{X}(Q).$$

Of course, an important particular case is when there are no constraints, in which case $\D=TQ$, $\tilde{\nabla}^{nh}=\tilde{\nabla}$ and the equations above become the Euler-Lagrange equations for a system with Lagrangian given by \eqref{mechanical:lagrangian} in Riemannian form:
$$\tilde{\nabla}_{\dot{q}} \dot{q} + \text{grad } V (q(t))=0.$$

\subsection{Riemannian geometry and Lie groups}\label{sec: background_Lie}

Let $G$ be a Lie group and its Lie algebra $\g$ be defined as the tangent space to $G$ at the identity, $\mathfrak{g}:=T_{e}G$. Denote by $L_g$ the left-translation map $L_g:G\to G$ given by $L_g(h)=gh,$ for all $g,h\in G,$ where $gh$ denotes the Lie group multiplication between the elements $g$ and $h$.




\vspace{.2cm}

\begin{definition}\label{def: left-action}
Let $G$ be a Lie group and $H$ be a smooth manifold. A \textit{left-action} of $G$ on $H$ is a smooth map $\Phi: G \times H \to H$ such that
\begin{enumerate}
    \item $\Phi(e, q) = q$,
    \item $\Phi(g, \Phi(h, q)) = \Phi(gh, q)$,
    \item The map $\Phi_g: H \to H$ defined by $\Phi_g(q) = \Phi(g, q)$ is a diffeomorphism,
\end{enumerate}
for all $g,h \in G,$ $q \in H$.
\end{definition}

We now define an important class of group actions:

\vspace{.2cm}

\begin{definition}\label{def: types of group actions}
Let $G$ be a Lie group, $H$ be a smooth manifold, and $\Phi: G \times H \to H$ be a left-action. We say that $\Phi$ is \textit{transitive} if for every $p,q \in H$, there exists some $g \in G$ such that $gp = q$. \end{definition}

One of the most important cases of a Lie group action is a Lie group acting on itself. The left action $L_{g}$ is a left action of $G$ on itself and a diffeomorphism on $G$.  Its tangent map  (i.e, the linearization or tangent lift) is denoted by $T_{h}L_{g}:T_{h}G\to T_{gh}G$. 
Let us denote the set of vector fields on a Lie group $G$ by $\mathfrak{X}(G)$. A \textit{left invariant} vector field is an element $X$ of $\mathfrak{X}(G)$ such that $T_{h}L_g(X(h))=X(L_g(h))=X(gh)$ $\forall\,g,h\in G$. We denote the vector space of left-invariant vector fields on $G$ by $\mathfrak{X}_L(G).$ Consider the isomorphism $(\cdot)_{L}:\g\to\mathfrak{X}_L(G)$ given by $\xi_{L}(g)=(T_{e}L_{g})(\xi),$ where $\xi\in\g$ and $g\in G.$ Note that under this isomorphism we have $\g\simeq\mathfrak{X}_L(G).$ 



Consider an inner-product on the Lie algebra $\g$ denoted by $\langle \cdot,\cdot\rangle_\g$. Using the left-translation we define a Riemannian metric on $G$ by the relation $\langle X,Y\rangle:=\langle (T_{g} L_{g^{-1}}) (X),(T_{g} L_{g^{-1}}) (Y)\rangle_\g$ for all $g\in G, \; X,Y\in T_g G$, which is called left-invariant metric because $\langle (T_{g}L_{h})(X),(T_{g}L_{h})(X)\rangle=\langle X,Y\rangle$ for all $g, h\in G, \; X,Y\in T_g G$. Let $\tilde{\nabla}$ be the Levi-Civita connection on $G$ associated to the above metric. The isomorphism $(\cdot)_{L}:\g\to\mathfrak{X}_L(G)$ helps us define an operator $\tilde{\nabla}^\g:\g\times\g\to\g$ by 
\[\tilde{\nabla}_\xi^\g \eta:=\tilde{\nabla}_{\xi_{L}} \eta_{L}(e)\]
for all $\xi,\eta\in\g.$ Although $\tilde{\nabla}^\g$ is not a connection we will refer to it as the Riemannian $\g-$connection corresponding to $\tilde{\nabla}$ (see \cite{goodman2024reduction} for more details).

We denote by $\g^{*}$, the dual space of $\g$ composed by the $\R$-valued linear maps $\g\to\R$. The dual space $\g^{*}$ is a vector space isomorphic to $\g$ itself through the musical isomorphism $\flat_{\g}:\g\to \g^{*}$ defined by the formula $\flat_{\g}(\eta) (\xi) = \langle \eta, \xi \rangle_{\g}$ for $\eta, \xi\in\g$. In the next result, we will make use of the adjoint action $\ad_{\xi}:\g\to\g$ and its dual map $\ad_{\xi}^\ast$ with respect to the Riemannian metric on $G$. This leads to the following expression for $\tilde{\nabla}^\g$ (see Theorem $5.40$ of \cite{bullo2019geometric}, for instance)
\begin{lemma}\label{Riemanian:g:conn}
    The  Riemannian  $\g-$connection satisfies: \[\tilde{\nabla}^\g_\xi\eta=\frac{1}{2}\left([\xi,\eta]_\g-\sharp_{\g} \left[ \text{ad}^{*}_{\xi} \flat_{\g} (\eta)\right] - \sharp_{\g} \left[ \text{ad}^{*}_{\eta} \flat_{\g} (\xi)\right] \right) \]
    for all $\xi,\eta\in\g$, where $\sharp_{\g}:\g^{*}\to\g$ is the inverse map of $\flat_{\g}$.
\end{lemma}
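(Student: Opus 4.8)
The plan is to derive the formula directly from the Koszul formula for the Levi-Civita connection $\tilde{\nabla}$ on $(G,\langle\cdot,\cdot\rangle)$, specialized to left-invariant vector fields. Recall that the Koszul formula expresses, for $X,Y,Z\in\mathfrak{X}(G)$,
\[
2\langle \tilde{\nabla}_X Y, Z\rangle = X\langle Y,Z\rangle + Y\langle X,Z\rangle - Z\langle X,Y\rangle + \langle [X,Y],Z\rangle - \langle [X,Z],Y\rangle - \langle [Y,Z],X\rangle,
\]
and my strategy is to feed left-invariant fields into this identity and then evaluate at the identity.

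First I would take $X=\xi_L$, $Y=\eta_L$, $Z=\zeta_L$ for arbitrary $\xi,\eta,\zeta\in\g$, using the isomorphism $(\cdot)_L:\g\to\mathfrak{X}_L(G)$. The decisive simplification is that, since the metric is left-invariant and each of $\xi_L,\eta_L,\zeta_L$ is left-invariant, every pairing such as $\langle\eta_L,\zeta_L\rangle$ is a \emph{constant} function on $G$, equal to $\langle\eta,\zeta\rangle_\g$. Consequently the three directional-derivative terms in the Koszul formula vanish identically, and only the three bracket terms survive.

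Next I would evaluate the resulting equation at the identity $e$. Using that the bracket of left-invariant fields is left-invariant, $[\xi_L,\eta_L]=([\xi,\eta]_\g)_L$, that $\langle\cdot,\cdot\rangle$ restricts to $\langle\cdot,\cdot\rangle_\g$ at $e$, and the definition $\tilde{\nabla}^\g_\xi\eta=\tilde{\nabla}_{\xi_L}\eta_L(e)$, the identity collapses to a relation purely in $\g$, namely
\[
2\langle \tilde{\nabla}^\g_\xi\eta,\zeta\rangle_\g = \langle [\xi,\eta]_\g,\zeta\rangle_\g - \langle [\xi,\zeta]_\g,\eta\rangle_\g - \langle [\eta,\zeta]_\g,\xi\rangle_\g,
\]
valid for every $\zeta\in\g$. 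Finally I would rewrite the last two terms through the dual adjoint: from the defining relation $(\ad^{*}_\xi\alpha)(\zeta)=\alpha([\xi,\zeta]_\g)$ and the musical isomorphisms, the element $\sharp_\g[\ad^{*}_\xi\flat_\g(\eta)]$ is characterized by $\langle \sharp_\g[\ad^{*}_\xi\flat_\g(\eta)],\zeta\rangle_\g = \flat_\g(\eta)([\xi,\zeta]_\g)=\langle [\xi,\zeta]_\g,\eta\rangle_\g$, and symmetrically with $\xi,\eta$ interchanged. Substituting these identifications and invoking nondegeneracy of $\langle\cdot,\cdot\rangle_\g$ (the identity holds for all $\zeta$) yields the claimed expression for $\tilde{\nabla}^\g_\xi\eta$.

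The only delicate point, and the step I would treat as the main obstacle, is the bookkeeping around $\ad^{*}$: one must fix the conventions $\ad_\xi\eta=[\xi,\eta]_\g$ and $(\ad^{*}_\xi\alpha)(\zeta)=\alpha(\ad_\xi\zeta)$ consistently so that the two correction terms land on the correct arguments and with the correct signs. Everything else is a routine specialization of the Koszul formula, made clean by the vanishing of the derivative terms for left-invariant data.
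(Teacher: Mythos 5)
Your derivation is correct: the vanishing of the three derivative terms for left-invariant data, the evaluation at $e$, and the identification $\langle[\xi,\zeta]_\g,\eta\rangle_\g=\langle\sharp_\g[\ad^*_\xi\flat_\g(\eta)],\zeta\rangle_\g$ together give exactly the stated formula. The paper offers no proof of its own (it cites Theorem 5.40 of \cite{bullo2019geometric}), and your Koszul-formula argument is precisely the standard proof behind that citation, so there is nothing to add.
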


\vspace{.2cm}

\begin{lemma}[see \cite{goodman2024reduction}]\label{lemma2}
Consider a Lie group $G$ with Lie algebra $\g$ and left-invariant Levi-Civita connection $\nabla$. Let $g: [a,b] \to G$ be a smooth curve and $X$ a smooth vector field along $g$. Then the following relation holds for all $t \in [a, b]$:
\begin{align}
    \tilde{\nabla}_{\dot{g}} X(t) = g(t) \left(\dot{\eta}(t) + \tilde{\nabla}_{\xi}^\g \eta(t) \right) ,\label{Cov-to-cov}
\end{align}
where $\xi(t) = g(t)^{-1}\dot{g}(t) $ and $\eta(t) = g(t)^{-1}X(t)$.
\end{lemma}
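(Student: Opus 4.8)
The plan is to pass to a fixed left-invariant frame and reduce the whole computation to the defining identity of $\tilde{\nabla}^\g$. Fix a basis $\{e_1,\dots,e_n\}$ of $\g$ and let $\{(e_i)_L\}$ be the associated global frame of left-invariant vector fields. Writing $\eta(t)=\sum_i \eta^i(t)e_i$ and $\xi(t)=\sum_j \xi^j(t)e_j$, the defining relations $X(t)=g(t)\eta(t)$ and $\dot{g}(t)=g(t)\xi(t)$ become $X(t)=\sum_i \eta^i(t)\,(e_i)_L(g(t))$ and $\dot{g}(t)=\sum_j \xi^j(t)\,(e_j)_L(g(t))$, since $g(t)v=(T_eL_{g(t)})(v)$ for $v\in\g$. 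This expresses both the curve's velocity and the vector field along $g$ in one and the same frame, which is exactly what makes the covariant derivative computable term by term.

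First I would apply the product rule for the covariant derivative along $g$ to the expansion of $X$, obtaining
\[
\tilde{\nabla}_{\dot{g}}X = \sum_i \dot{\eta}^i(t)\,(e_i)_L(g(t)) + \sum_i \eta^i(t)\,\tilde{\nabla}_{\dot{g}}(e_i)_L .
\]
The first sum is precisely $g(t)\dot{\eta}(t)$, so the entire content of the lemma is encoded in the second sum. Using the $C^\infty(G)$-linearity of $\tilde{\nabla}$ in its first argument together with $\dot{g}=\sum_j \xi^j(e_j)_L$, I would rewrite $\tilde{\nabla}_{\dot{g}}(e_i)_L=\sum_j \xi^j(t)\,\tilde{\nabla}_{(e_j)_L}(e_i)_L$.

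The crucial step, and the one I expect to be the real obstacle, is to show that $\tilde{\nabla}_{(e_j)_L}(e_i)_L$ is again left-invariant and equals $(\tilde{\nabla}^\g_{e_j}e_i)_L$ at every point, not merely at the identity. The definition recorded in the excerpt gives only the value $\tilde{\nabla}^\g_{e_j}e_i=\tilde{\nabla}_{(e_j)_L}(e_i)_L(e)$ at $e$. To upgrade this to a frame identity valid at every $g\in G$, one uses that the metric is left-invariant, so each left-translation $L_h$ is an isometry and hence preserves its Levi-Civita connection; since $(L_h)_*(e_i)_L=(e_i)_L$ for left-invariant fields, it follows that $(L_h)_*\bigl(\tilde{\nabla}_{(e_j)_L}(e_i)_L\bigr)=\tilde{\nabla}_{(e_j)_L}(e_i)_L$, i.e. the covariant derivative of two left-invariant vector fields is itself left-invariant. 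This yields the frame identity $\tilde{\nabla}_{(e_j)_L}(e_i)_L=(\tilde{\nabla}^\g_{e_j}e_i)_L$ on all of $G$.

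Finally I would reassemble the pieces. Substituting the frame identity and invoking the $\R$-bilinearity of $\tilde{\nabla}^\g:\g\times\g\to\g$ gives
\[
\sum_i \eta^i\,\tilde{\nabla}_{\dot{g}}(e_i)_L = \sum_{i,j}\eta^i\xi^j\,(\tilde{\nabla}^\g_{e_j}e_i)_L = (\tilde{\nabla}^\g_{\xi}\eta)_L(g(t)) = g(t)\,\tilde{\nabla}^\g_{\xi}\eta .
\]
Adding this to the first sum yields $\tilde{\nabla}_{\dot{g}}X=g(t)\bigl(\dot{\eta}+\tilde{\nabla}^\g_{\xi}\eta\bigr)$, which is the asserted formula. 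The only genuinely non-routine input is the left-invariance of the connection used above; everything else is the Leibniz rule for the covariant derivative along a curve together with bilinearity in the chosen frame.
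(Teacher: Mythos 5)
Your proof is correct. Note that the paper itself offers no proof of this lemma --- it is stated with a citation to \cite{goodman2024reduction} --- so there is no in-paper argument to compare against; your left-invariant-frame computation is the standard one and fills that gap completely. You correctly isolate the only non-routine point, namely that $\tilde{\nabla}_{(e_j)_L}(e_i)_L$ is itself left-invariant and hence equals $(\tilde{\nabla}^\g_{e_j}e_i)_L$ globally rather than only at $e$, and your justification via naturality of the Levi-Civita connection under the isometries $L_h$ is exactly right. One cosmetic remark: in the step $\tilde{\nabla}_{\dot{g}}(e_i)_L=\sum_j \xi^j(t)\,\tilde{\nabla}_{(e_j)_L}(e_i)_L$ the coefficients $\xi^j$ are functions of $t$, not of $G$, so the fact you are really using is that $\tilde{\nabla}_v W$ depends only on the tangent vector $v$ at the given point (tensoriality in the first slot, which follows from $C^{\infty}$-linearity); the conclusion is unaffected.
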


The geodesic equation on a Lie group equipped with a left-invariant metric might be recast as an equation on the Lie algebra, the Euler-Poincar\'e equations, as the well-known result below establishes.

\begin{theorem}\label{thm: EP_geo}
Suppose that $g: [a, b] \to G$ is a geodesic with respect to the Levi-Civita connection, and let $\xi :=g^{-1} \dot{g} $. Then, $\xi$ satisfies on $[a, b]$ the Euler-Poincar\'e equations
\begin{align}
    \dot{\xi} + \tilde{\nabla}^\g_{\xi} \xi = 0.\label{EP: geo}
\end{align}
\end{theorem}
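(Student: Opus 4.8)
The plan is to obtain the Euler-Poincar\'e equation \eqref{EP: geo} as an immediate specialization of the covariant-to-covariant relation of Lemma \ref{lemma2}, applied to the velocity field of the geodesic itself. The whole argument is essentially a one-line substitution followed by cancellation of an invertible map, so I expect no genuine difficulty beyond correctly interpreting the notation.

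First I would apply Lemma \ref{lemma2} with the particular choice $X(t) = \dot{g}(t)$, a smooth vector field along $g$. With this choice the associated Lie-algebra curve is $\eta(t) = g(t)^{-1} X(t) = g(t)^{-1}\dot{g}(t)$, which by definition is precisely $\xi(t)$. Hence $\eta = \xi$, and relation \eqref{Cov-to-cov} reads
\[
\tilde{\nabla}_{\dot{g}}\dot{g}(t) = g(t)\left(\dot{\xi}(t) + \tilde{\nabla}^\g_\xi \xi(t)\right).
\]
Next I would invoke the geodesic hypothesis: since $g$ is a geodesic for the Levi-Civita connection $\tilde{\nabla}$, we have $\tilde{\nabla}_{\dot{g}}\dot{g} = 0$ on $[a,b]$. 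Substituting this into the previous display gives
\[
0 = g(t)\left(\dot{\xi}(t) + \tilde{\nabla}^\g_\xi \xi(t)\right)
\]
for every $t \in [a,b]$.

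Finally I would conclude using the invertibility of left translation. The shorthand $g(t)(\cdot)$ stands for the tangent map $T_e L_{g(t)} : \g \to T_{g(t)}G$, which is a linear isomorphism because $L_{g(t)}$ is a diffeomorphism of $G$. Therefore the element of $\g$ inside the parentheses must itself vanish, which yields $\dot{\xi} + \tilde{\nabla}^\g_\xi \xi = 0$, as claimed. The only step demanding any care is the bookkeeping identity $\eta = \xi$ produced by the choice $X = \dot{g}$, together with the observation that $T_e L_{g(t)}$ is injective so that the Lie-algebra expression may be cancelled; there is no substantial obstacle to overcome.
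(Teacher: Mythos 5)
Your proof is correct and matches the paper's approach: the paper states this result as well known without writing out a proof, but the identity $\tilde{\nabla}_{\dot{g}}\dot{g} = T_{e}L_{g}\bigl(\dot{\xi} + \tilde{\nabla}^\g_{\xi}\xi\bigr)$ obtained from Lemma \ref{lemma2} with $X=\dot g$, followed by cancelling the isomorphism $T_eL_{g}$, is exactly the computation the paper itself performs in the proofs of Theorems \ref{thm:LP_geo} and \ref{thm:LP_mecg}. No gaps.
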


\subsection{Nonholonomic systems on Lie Groups}

Consider a left-invariant distribution $\mathcal{D}$ on the Lie Group $G$, that is, for each $g\in G$, the fiber at $g$, denoted by $\D_{g}$, is defined by $T_{e}L_{g}(\frakd)$, 
where $\frakd$ is a subspace of the Lie algebra $\g$. Using the inner product $\langle \cdot,\cdot \rangle_{\g}$ on the Lie algebra, we may define the orthogonal subset to $\frakd$ by $\frakd^{\bot}=\{\xi\in\g \; : \; \langle \xi,\eta \rangle_{\g}=0,\; \forall \eta\in\frakd \}$, then $\g=\frakd\oplus\frakd^\bot.$ Finally, consider the orthogonal projectors $\mathfrak{P}: \g \to \frakd$  and $\mathfrak{Q}:\g \to \frakd^{\bot}$.

Given a left invariant metric on $G$ and a left-invariant distribution $\D$, consider the associated orthogonal distribution $\D^{\bot}$ and the orthogonal projectors $\mathcal{P}: TG \to \D$ and $\mathcal{Q}:TG\to \D^{\bot}$. In  \cite{stratoglou2023virtual}, it has been shown that the orthogonal distribution $\D^{\bot}$ is left-invariant and $\D^{\bot}_{g} = T_{e}L_{g}(\frakd^{\bot})$. Moreover, the Lie algebra projectors satisfy $                \mathfrak{P} = T_{g}L_{g^{-1}} \circ \mathcal{P} \circ T_{e}L_{g}$ and  $\mathfrak{Q} = T_{g}L_{g^{-1}} \circ \mathcal{Q} \circ T_{e}L_{g}$.



Next, we define a nonholonomic $\frakd$-connection $\tilde{\nabla}^{\frakd}: \g \times \g \to \g$ to be a bilinear map satisfying 
\begin{equation}\label{d:connection}
    \tilde{\nabla}^{\frakd}_{\xi}\eta = \left(\tilde{\nabla}^{nh}_{\xi_{L}} \eta_{L}\right)(e),
\end{equation}
where $\tilde{\nabla}^{nh}$ is the nonholonomic connection corresponding to the Levi-Civita connection $\tilde{\nabla}$ on $G$. Then
    $$\tilde{\nabla}^{\frakd}_{\xi}\eta = \tilde{\nabla}^{\g}_{\xi} \eta + (\tilde{\nabla}^{\g}_{\xi}\mathfrak{Q})(\eta),$$ where $\tilde{\nabla}^{\g}_{\xi}\mathfrak{Q}$ is the bilinear map $(\tilde{\nabla}^{\g}_{\xi}\mathfrak{Q})(\eta)=\tilde{\nabla}^{g}_{\xi}(\mathfrak{Q}(\eta)) - \mathfrak{Q} (\tilde{\nabla}_{\xi} \eta)$, where $\tilde{\nabla}^\g$ is the Riemannian $\g$-connection corresponding to the Levi-Civita connection $\tilde{\nabla}$ on $G$.



It follows that $\tilde{\nabla}^{\frakd}_\xi \eta = \mathfrak{P}(\tilde{\nabla}_{\xi}^\g\eta)$, for all $\xi, \eta \in \frakd$. Therefore, we obtain the explicit expression for $\xi,\eta \in \frakd$
\begin{equation}\label{hcon_decomp}
    \tilde{\nabla}^{\frakd}_{\xi} \eta =\frac12 \mathfrak{P}\left([\xi,\eta]_\g-\sharp \left[ \text{ad}^{*}_{\xi} \flat (\eta)\right] - \sharp \left[ \text{ad}^{*}_{\eta} \flat (\xi)\right] \right).\end{equation}

We now express nonholonomic trajectories on Lie groups in terms of the Riemannian $\frakd$-connection (see \cite{stratoglou2023virtual} for details). Suppose that $g: [a,b] \to G$ is a nonholonomic trajectory with respect to a left-invariant metric and distribution $\D$ and let $\xi(t) = g(t)^{-1} \dot{g}(t)$. Then, $\xi$ satisfies
    \begin{equation}\label{reduced trajectory}
        \dot{\xi} + \tilde{\nabla}_{\xi}^{\frakd} \xi = 0,
    \end{equation}
    or, equivalently,
    \begin{equation}
        \dot{\xi} + (\mathfrak{P} \circ \sharp) \left[ \text{ad}^{*}_{\xi(t)}\flat(\xi(t))\right] = 0, \quad \xi\in \frakd.
    \end{equation}


\section{Nonholonomic systems on Riemannian homogeneous spaces}\label{sec3}
\subsection{Homogeneous spaces}

Let $G$ be a connected Lie group equipped with a left-invariant Riemannian metric. A \textit{homogeneous space} $H$ of $G$ is a smooth manifold on which $G$ acts transitively. Any Lie group is itself a homogeneous space, where the transitive action is given by left-translation (or right-translation). 

Suppose that $\Phi: G \times H \to H$ is a transitive left-action, which we denote by $gx := \Phi_g(x)$. It can be shown that for any $x \in H$, we have $G/\text{Stab}(x) \cong H$ as differentiable manifolds, where $\text{Stab}(x) := \{g \in G \ \vert \ gx = x\}$ denotes the \textit{stabilizer subgroup} (also called the \textit{isotropy subgroup}) of $x$, and $G/\text{Stab}(x)$ denotes the space of equivalence classes determined by the equivalence relation $g \sim h$ if and only if $g^{-1}h \in \text{Stab}(x)$. In addition, for any closed Lie subgroup $K \subset G$, the left-action $\Phi: G \times G/K \to G/K$ satisfying $\Phi_g([h]) = [gh]$ for all $g, h \in G$ is transitive, and so $G/K$ is a homogeneous space. Hence, we may assume without loss of generality that $H := G/K$ is a homogeneous space of $G$ for some closed Lie subgroup $K$. Let $\pi: G \to H$ be the canonical projection map. With this notation, the left multiplication commutes with the left action in the sense that $\Phi_{g}\circ \pi=\pi\circ L_{g}$ for any $g\in G$.

\vspace{.2cm}

\begin{example}\label{example1}
Consider the special Euclidean group $SE(3)$ composed of rotations and translations in the space $\R^{3}$. An element of $SE(3)$ is a pair $(R,r)$ where $R$ is a rotation matrix in $SO(3)$ and $r\in \R^{3}$. Consider the action of $\SE(3)$ on $\R^3$ given by the map $\Psi:\SE(3)\times\R^{3}\to\R^{3},$ $ \Psi_{(R,r)}(x)=Rx+r$. This action performs a rotation $R$ on the vector $x\in\R^{3}$ followed by a translation by $r$. This action is transitive, thus $\R^3$ has the structure of a homogeneous space. 

That is, $\R^3$ can be seen as the quotient $\R^3\simeq\SE(3)/K$ where $K$ is a subgroup of $\SE(3)$ such that $K=\{(k,0) : k\in\SO(3)\}$ thus $K\simeq\SO(3)$. Moreover, $K=\text{Stab}(0)$ for $0\in\R^3$ which is the stabilizer subgroup of the action $\Psi.$ The projection map $\pi:\SE(3)\to\R^3$ is given by $\pi(R,r)=R0+r=r.$

Moreover, we have that $\SE(3)\simeq\SO(3)\times\R^3$, using the hat map identification $\hat{(\cdot)}:\R^3 \to \so(3)$ and the standard basis $\{e_1,e_2,e_3\}$ for $\R^3$ (see \cite{holm2009geometric} for instance), we have that the basis for the Lie algebra $\se(3)$ consists of the elements $\bar{e}_1=(0,e_1), \bar{e}_2=(0,e_2), \bar{e}_3=(0,e_3), \bar{e}_4=(\hat{e}_1,0), \bar{e}_5=(\hat{e}_2,0), \bar{e}_6=(\hat{e}_3,0)$. Hence, from the projection map $\pi$, we deduce $T_{I}\pi (\bar{e}_1)=e_1, T_{I}\pi(\bar{e}_2)=e_2, T_{I}\pi(\bar{e}_3)=e_3, T_{I}\pi(\bar{e}_4)=T_{I}\pi(\bar{e}_5)=T_{I}\pi(\bar{e}_6)=0$, where $I$ denotes the $4\times 4$ identity matrix.  \hfill$\square$  
\end{example}

We define the \textit{vertical subspace} at $g \in G$ by $\text{Ver}_g := \ker (T_{g} \pi)$, from which we may construct the \textit{vertical bundle} as $VG := \bigsqcup_{g \in G} \{g\} \times \text{Ver}_g$. Given a Riemannian metric $\left< \cdot, \cdot\right>_G$ on $G$, we define the \textit{horizontal subspace} at any point $g \in G$ (with respect to $\left< \cdot, \cdot \right>_G)$ as the orthogonal complement of $\text{Ver}_g.$ That is, $\text{Hor}_g := \text{Ver}_g^\perp$. Similarly, we define the \text{horizontal bundle} as $HG := \bigsqcup_{g \in G} \{g\} \times \text{Hor}_g$. Both the vertical and horizontal bundles are vector bundles, and are in fact subbundles of the tangent bundle $TG$. It is clear that $T_g G = \text{Ver}_g \oplus \text{Hor}_g$ for all $g \in G$, so that the Lie algebra $\g$ of $G$ admits the decomposition $\g = \mathfrak{s} \oplus \mathfrak{h}$, where $\mathfrak{s}$ is the Lie algebra of $K$ and $\mathfrak{h} \cong T_{\pi(e)} H$. We denote the orthogonal projections onto the vertical and horizontal subspaces by $\mathcal{V}$ and $\mathcal{H}$. Moreover, both vertical and horizontal spaces are left-invariant, that is $\text{Ver}_{g}=T_{e}L_{g}(\mathfrak{s})$ and $\text{Hor}_{g}=T_{e}L_{g}(\mathfrak{h})$, provided that the Riemannian metric is left-invariant.


A section $Z \in \Gamma(HG)$ is called a \textit{horizontal vector field}. That is, $Z \in \mathfrak{X}(G)$ and $Z(g) \in \text{Hor}_g$ for all $g \in G.$ A vector field $Y \in \mathfrak{X}(G)$ is said to be $\pi$-related to some $X \in \mathfrak{X}(H)$ if $(T_{g}\pi) (Y_g) = X_{\pi(g)}$ for all $g \in G$. If, in addition, $Y \in \Gamma(HG),$ we say that $Y$ is a \textit{horizontal lift} of $X$. We further define a horizontal lift of a smooth curve $q: [a, b] \to H$ as a smooth curve $\tilde{q}: [a, b] \to G$ such that $\pi \circ \tilde{q} = q$ and $\dot{\tilde{q}}(t)$ is horizontal for all $t \in [a, b]$. 

Let $H$ be a homogeneous space of $G$ and $X \in \mathfrak{X}(H)$. In \cite{goodman2024reduction}, the authors have shown that for each $X \in \mathfrak{X}(H),$ there exists a unique horizontal lift $\tilde{X}$ of $X$. That is, the map $\tilde{\cdot}: \mathfrak{X}(H) \to \Gamma(HG)$ sending $X \mapsto \tilde{X}$ is $\R$-linear and one-to-one. Moreover, for each smooth curves $q: [a, b] \to H$ and $q_0 \in \pi^{-1}(\{q(a)\})$, there exists a unique horizontal lift $\tilde{q}: [a, b] \to G$ of $q$ satisfying $\tilde{q}(a) = q_0$, called the \textit{horizontal lift} of $q$ whose velocity curve $\dot{\tilde{q}}$ is $\pi$-related to $\dot{q}$ and is horizontal. If $q: [a, b] \to H$ and $\tilde{q}: [a, b] \to G$ is a horizontal lift of $q$, then, for each $\tilde{\eta}: [a, b] \to \mathfrak{h}$, there exists a unique $X \in \Gamma(q)$ such that its horizontal lift $\tilde{X}$ along $\tilde{q}$ satisfies $(T_{\tilde{q}(t)}L_{\tilde{q}(t)^{-1}}) (\tilde{X}(t)) = \tilde{\eta}(t)$ for all $t\in[a,b]$.

\subsection{Riemannian Homogeneous Spaces}

Consider a connected Lie group $G$ and a homogeneous space $H = G/K$ of $G$. Since $H$ is a smooth manifold, it can be equipped with a Riemannian metric. Similarly to Section \ref{sec: background_Lie}, we are interested in those metrics $\left<\cdot, \cdot\right>_H$ which in some sense preserve the structure of the homogeneous space. In this case, we wish to choose $\left<\cdot, \cdot\right>_H$ so that the canonical projection map $\pi: G \to H$ is a \textit{Riemannian submersion}. That is, so that $T_{g}\pi$ is a linear isometry between $\text{Hor}_g$ and $T_{\pi(g)} H$ for all $g \in G$. In such a case, we call $H$ a \textit{Riemannian homogeneous space}. It is clear that if $H$ is a Riemannian homogeneous space, then $\left<\Hor(X), \Hor(Y)\right>_G = \left<(T_{g}\pi) (X), (T_{g}\pi) (Y)\right>_H$ for all $X, Y \in T_g G, g \in G$. In particular, $\big{\langle} \tilde{X}, \tilde{Y} \big{\rangle}_G = \big{\langle} X, Y \big{\rangle}_H$ for all $X, Y \in T_g G, g \in G$. The metric $\left< \cdot, \cdot \right>_H$ is said to be \textit{$G$-invariant} if it is invariant under the left-action $\Phi_g$ for all $g \in G$. It can be shown that every homogeneous space $H = G/K$ that admits a $G$-invariant metric is \textit{reductive}. That is, the Lie algebra admits a decomposition $\g = \mathfrak{s} \oplus \mathfrak{h}$, where $\mathfrak{s}$ is the Lie algebra of $K$, and $\mathfrak{h}$ satisfies $[\mathfrak{s}, \mathfrak{h}] \subset \mathfrak{h}$. In particular, this implies that $\mathfrak{h} \cong T_{\pi(e)}(G/K)$ as vector spaces.

\vspace{.2cm}

\begin{example}\label{example2}
Continuing with Example \ref{example1}, equip the Lie group $\SE(3)$ with the usual left-invariant metric determined by the inner product $\langle(\hat{\Omega}_1,r_1),(\hat{\Omega}_2,r_2)\rangle_{\se(3)}=\Omega_1^T\Omega_2 + r_1^Tr_2$, where $\Omega_1, \Omega_2\in\R^3$ and $r_1, r_2\in\R^3$. Using this metric we define an inner product on $T_0\R^3$ through the relation $\langle (T_{I}\pi) (\xi_{1}), (T_{I}\pi) (\xi_{2}) \rangle_{T_0\R^3}=\langle \xi_{1}, \xi_{2}\rangle_{\se(3)}$ for all $\xi_{1}, \xi_{2}\in \se(3)$ and we extend this inner product to an $\SE(3)$-invariant Riemannian metric on $\R^3$ by $\langle X,Y\rangle_{\R^3}=\langle (T_{r}\Phi_{(0,r)})(X),(T_{r}\Phi_{(0,r)})(Y)\rangle_{T_0\R^3}$ where $X,Y\in T_r\R^3$. Thus, since $T_{r}\Phi_{(0,r)}=Id$, we have that $\langle X,Y\rangle_{\R^3}=X^{T}Y$, which is the standard Euclidean product on $\R^{3}$.

With this Riemannian structure, we have that $\s=\ker(T_{I}\pi)=\text{span}\{\bar{e}_4, \bar{e}_5, \bar{e}_6\}\simeq\so(3)$ and we define $\h=\s^\perp$ such that $\h=\text{span}\{\bar{e}_1, \bar{e}_2, \bar{e}_3\}\simeq\R^3.$ The adjoint operator for the Lie algebra $\se(3)$ is given by $\ad:\se(3)\times\se(3)\to\se(3),$ 
\[\ad_{(\hat{\Pi},t)}(\hat{\Omega},s)=(\ad_{\hat{\Pi}}\hat{\Omega}, \hat{\Pi}s - \hat{\Omega}t),\]
where $\ad_{\hat{\Pi}}\hat{\Omega}$ is the adjoint operator on $\so(3)$ and $t,s\in\R^3$ (see \cite{holm2009geometric} for instance).

Since $\s=\text{span}\{\bar{e}_4, \bar{e}_5, \bar{e}_6\}\simeq\so(3)$, the vertical space of $\SE(3)$ is given by $\text{Ver}_g=\text{span}\{g\bar{e}_4, g\bar{e}_5, g\bar{e}_6\}\simeq\SO(3)$ and the horizontal space is $\text{Hor}_g=\text{span}\{g\bar{e}_1, g\bar{e}_2, g\bar{e}_3\}\simeq\R^3$ where $g\in\SE(3)$. The horizontal projection is given by $\Hor(\hat{\Omega},r)=(0,r).$\hfill$\square$
\end{example}

There is an equivalence between the existence of a $G$-invariant metric on $H$ and the existence of a left-invariant metric on $G$ for which $H$ is a Riemannian homogeneous space, for more details see \cite{goodman2024reduction}. 

Denote the Levi-Civita connections on $H$ and $G$ with respect to these metrics by $\nabla$ and $\tilde{\nabla}$, respectively. In a Riemannian homogeneous space, due to the fact that $\pi$ is a Riemannian submersion we have the remarkable property that if a geodesic on $G$ is horizontal at some point, then it is horizontal at all points. In particular, the horizontal lift of geodesics on $H$ are geodesics on $G$ (see \cite{o1967submersions}). These geodesics are usually called \textit{horizontal geodesics}.

\vspace{.2cm}

\begin{theorem}\label{thm:LP_geo}
Let $H$ be a Riemannian homogeneous space with respect to a Lie group action by $G$. If $g: [a, b] \to G$ is a curve on $G$, $q(t)=\pi(g(t))$ the projection of $g$ on $H$ and $\xi(t) := (T_{g(t)} L_{g^{-1}(t)}) (\dot{g}(t))$, then the following statements are equivalent:
\begin{enumerate}
    \item The curve $g: [a, b] \to G$ is a horizontal geodesic,
    \item $\xi(t)\in\h$ for all $t\in [a, b]$ and
    \begin{equation}\label{geodesics:thm:LP_geo}
      \dot{\xi} + \tilde{\nabla}_{\xi}^{\g} \xi=0.
    \end{equation}
    \item $g(t)=\tilde{q}(t)$ and the curve $q$ is a geodesic for $\nabla$.
\end{enumerate}
\end{theorem}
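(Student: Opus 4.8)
The plan is to split the three-way equivalence into the two pairs $(1)\Leftrightarrow(2)$ and $(1)\Leftrightarrow(3)$, each of which isolates one structural ingredient: the translation of the geodesic and horizontality conditions from $G$ down to the Lie algebra, and the passage between the total space $G$ and the base $H$ through the Riemannian submersion $\pi$. Once both pairs are established, the full equivalence of $(1)$, $(2)$ and $(3)$ follows immediately.

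For $(1)\Leftrightarrow(2)$, the main tool is Lemma \ref{lemma2} applied to the vector field $X=\dot{g}$ along $g$. With this choice one has $\eta(t)=g(t)^{-1}\dot{g}(t)=\xi(t)$, so Lemma \ref{lemma2} yields
\[
\tilde{\nabla}_{\dot{g}}\dot{g} = g(t)\left(\dot{\xi}(t)+\tilde{\nabla}^{\g}_{\xi}\xi(t)\right).
\]
Because $T_{e}L_{g(t)}$ is a linear isomorphism, the geodesic condition $\tilde{\nabla}_{\dot{g}}\dot{g}=0$ holds if and only if the Euler--Poincar\'e equation \eqref{geodesics:thm:LP_geo} holds, recovering Theorem \ref{thm: EP_geo} together with its converse. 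For the horizontality I would invoke the left-invariance of the horizontal bundle, $\text{Hor}_{g}=T_{e}L_{g}(\h)$, which shows that $\dot{g}(t)\in\text{Hor}_{g(t)}$ precisely when $\xi(t)=(T_{g(t)}L_{g(t)^{-1}})(\dot{g}(t))\in\h$. Conjoining the two statements, condition $(1)$ (that $g$ be simultaneously horizontal and a geodesic) is exactly condition $(2)$.

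For $(1)\Leftrightarrow(3)$ I would argue through the submersion. Assuming $(1)$, the velocity $\dot{g}$ is horizontal at every $t$, so $g$ is a horizontal lift of $q=\pi\circ g$; by the uniqueness of horizontal lifts with prescribed initial point, $g$ coincides with the horizontal lift $\tilde{q}$ determined by $\tilde{q}(a)=g(a)$. That $q$ is a $\nabla$-geodesic then follows from the fact that a horizontal geodesic on $G$ projects under the Riemannian submersion $\pi$ to a geodesic on $H$. Conversely, assuming $(3)$, the curve $g=\tilde{q}$ is horizontal by construction, and since $q$ is a geodesic for $\nabla$, the result of \cite{o1967submersions} quoted above guarantees that its horizontal lift is a geodesic on $G$; hence $g$ is a horizontal geodesic, which is $(1)$.

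The step I expect to be the main obstacle is the interface between $G$ and $H$ in $(1)\Leftrightarrow(3)$: everything there rests on $\pi$ being a Riemannian submersion and on O'Neill's horizontal-geodesic theorem, and care is needed to verify that the horizontal lift is taken with the initial condition $\tilde{q}(a)=g(a)$, so that the identification $g=\tilde{q}$ is a genuine equality of curves rather than merely equality up to the fibre $\pi^{-1}(\{q(a)\})$. By contrast, the algebraic equivalence $(1)\Leftrightarrow(2)$ is essentially a direct application of Lemma \ref{lemma2} and the left-invariance of $\text{Hor}$.
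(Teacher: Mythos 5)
Your proposal is correct and follows essentially the same route as the paper: the equivalence $(1)\Leftrightarrow(2)$ via Lemma \ref{lemma2} (equivalently the Euler--Poincar\'e equation \eqref{EP: geo}) together with left-invariance of the horizontal bundle, and $(1)\Leftrightarrow(3)$ by appeal to O'Neill's results on Riemannian submersions \cite{o1967submersions}. You merely spell out the horizontality bookkeeping and the initial-condition matching for the horizontal lift in more detail than the paper's terse proof does.
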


\begin{proof}
    The proof that statements (1) and (2) are equivalent derives from equation \eqref{EP: geo} and the fact that geodesics horizontal at one point remain horizontal for all times. If $g(t)$ is a horizontal geodesic with respect to the Levi-Civita connection, then $$(T_{e} L_{g}) \left( \dot{\xi} + \tilde{\nabla}_{\xi}^{\g} \xi\right) = 0, \quad \dot{g}\in HG.$$ Since left-translation is a diffeomorphism, we have the desired result.

    The equivalence between (1) and (3) is a general fact for Riemannian submersions that can be found on \cite{o1967submersions}.
\end{proof}


\begin{example}
Continuing with Examples \ref{example1} and \ref{example2}, consider a horizontal geodesic $g:[a,b]\to\SE(3)$ given by $g(t)=(R,r(t))$ where $R$ is a constant element of $\SO(3)$ and $r(t)\in\R^3$. The left translation to the identity gives $\xi=(T_{g}L_{g^{-1}})(\dot{g})=(R^{T}, -R^{T}r(t))(0,\dot{r}(t))=\left(0,R^{T}\dot{r}\right)$ thus $\xi$ is an element of the horizontal subspace at the identity, i.e. $\xi\in\h=\emph{Hor}_{e}=\text{span}\{\bar{e}_1,\bar{e}_2,\bar{e}_3\}$. From Lemma \ref{Riemanian:g:conn}, $\tilde{\nabla}^\g_{\xi}\xi=0$, since $\text{ad}_{\xi}^{*}\xi = 0$. Thus,  (\ref{geodesics:thm:LP_geo}) reads $\ddot{r}=0.$ Let $q(t)=\pi(g(t))$. So,  $q(t)=r(t)\in\R^3$. Clearly, since $\ddot{r}=0$, we have that $r$ is a geodesic on $\R^{3}$. In addition, the horizontal lift of $q(t)$ to the point $(R,r(0))\in \SE(3)$ is $\tilde{q}(t)=(R, r(t))$.\hfill$\square$
\end{example}

The next Proposition relates the covariant derivative of vector fields in $G$ and $H$ and will be useful later.

\vspace{.2cm}

\begin{proposition}[Lemma 45 from Ch. 7, \cite{o1983semi}]\label{pi:related:connections}
    Let $\nabla$ and $\tilde{\nabla}$ be the Levi-Civita connections on $H$ and $G$, respectively. Then,
    \begin{enumerate}
        \item $T\pi(\tilde{\nabla}_{\tilde{X}}\tilde{Y}) = \nabla_{X} Y, \ X,Y\in \mathfrak{X}(H).$
        \item $\mathcal{H}\tilde{\nabla}_{\tilde{X}}\tilde{Y} = \widetilde{\nabla_{X}Y} \ $, where the right-hand side denotes the horizontal lift of the $\nabla_{X} Y$.
    \end{enumerate}
\end{proposition}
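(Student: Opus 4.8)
The plan is to derive both identities simultaneously from the Koszul formula, which expresses the Levi-Civita connection purely in terms of the metric and Lie brackets and hence is perfectly suited to comparing $\nabla$ and $\tilde{\nabla}$ across the submersion $\pi$. The guiding principle is that a horizontal lift $\tilde{X}$ is $\pi$-related to $X$, and that this relatedness is compatible with each of the three ingredients of the Koszul formula: inner products of lifts, directional derivatives of pulled-back functions, and Lie brackets.

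First I would fix $X,Y,Z\in\mathfrak{X}(H)$ and write the Koszul formula on $G$ for the scalar $2\langle\tilde{\nabla}_{\tilde{X}}\tilde{Y},\tilde{Z}\rangle_G$, a sum of six terms: three derivatives of metric coefficients such as $\tilde{X}\langle\tilde{Y},\tilde{Z}\rangle_G$, and three bracket terms such as $\langle[\tilde{X},\tilde{Y}],\tilde{Z}\rangle_G$. The aim is to show that this equals $(2\langle\nabla_X Y,Z\rangle_H)\circ\pi$ term by term. For the metric-derivative terms I would use that $\pi$ is a Riemannian submersion, so $\langle\tilde{Y},\tilde{Z}\rangle_G=\langle Y,Z\rangle_H\circ\pi$ (recorded in Section \ref{sec3}), together with $\pi$-relatedness, which gives $\tilde{X}(f\circ\pi)=(Xf)\circ\pi$ for every $f\in C^\infty(H)$; hence $\tilde{X}\langle\tilde{Y},\tilde{Z}\rangle_G=(X\langle Y,Z\rangle_H)\circ\pi$, and likewise for the other two.

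The delicate part is the three bracket terms, and it is where I expect the main obstacle. Here I would invoke the standard fact that the Lie bracket of $\pi$-related vector fields is again $\pi$-related: from $\tilde{X},\tilde{Z}$ being lifts of $X,Z$ it follows that $[\tilde{X},\tilde{Z}]$ is $\pi$-related to $[X,Z]$, so its horizontal part is exactly the horizontal lift, $\mathcal{H}[\tilde{X},\tilde{Z}]=\widetilde{[X,Z]}$. The subtlety is that $[\tilde{X},\tilde{Z}]$ itself need not be horizontal---its vertical component is the O'Neill obstruction to integrability of the horizontal bundle---so the identity becomes usable only once we pair against a horizontal field. Precisely because $\tilde{Y}$ is horizontal, the pairing annihilates that vertical component, giving $\langle[\tilde{X},\tilde{Z}],\tilde{Y}\rangle_G=\langle\widetilde{[X,Z]},\tilde{Y}\rangle_G=\langle[X,Z],Y\rangle_H\circ\pi$, and similarly for the remaining two brackets.

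Summing the six matched terms yields $\langle\tilde{\nabla}_{\tilde{X}}\tilde{Y},\tilde{Z}\rangle_G=\langle\widetilde{\nabla_X Y},\tilde{Z}\rangle_G$ for all $Z\in\mathfrak{X}(H)$. Since the horizontal lifts $\tilde{Z}(g)$ exhaust $\text{Hor}_g$ at every point and the metric is nondegenerate there, this forces $\mathcal{H}\tilde{\nabla}_{\tilde{X}}\tilde{Y}=\widetilde{\nabla_X Y}$, which is statement (2). Finally, applying $T\pi$---which annihilates vertical vectors and returns a horizontal lift to its projection---gives $T\pi(\tilde{\nabla}_{\tilde{X}}\tilde{Y})=T\pi(\widetilde{\nabla_X Y})=\nabla_X Y$, establishing statement (1).
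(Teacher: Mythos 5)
Your proof is correct. The paper does not prove this proposition at all --- it is quoted verbatim as Lemma 45 of Chapter 7 in \cite{o1983semi} --- and your Koszul-formula argument is precisely the standard proof given there: the metric-derivative terms match because $\pi$ is a Riemannian submersion and lifts are $\pi$-related, the bracket terms match because $[\tilde{X},\tilde{Z}]$ is $\pi$-related to $[X,Z]$ and its vertical (O'Neill) component is killed by pairing against the horizontal field $\tilde{Y}$, and nondegeneracy of the metric on the horizontal subspace then yields statement (2), with (1) following by applying $T\pi$. You correctly identified the one genuinely delicate point, namely that $[\tilde{X},\tilde{Z}]$ need not itself be horizontal, and handled it properly.
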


\subsection{Mechanical systems on Riemannian homogeneous spaces}
For the rest of the paper, assume that $H$ is a homogeneous manifold acted on by the Lie group $G$ and both manifolds are equipped with Riemannian metrics making $\pi:G\rightarrow H$ a Riemannian submersion.

Let $V:H\rightarrow \R$ be a potential function on the homogeneous space $H$. Via the projection map $\pi$, this potential function induces a potential function on the Lie group $G$ denoted by $\tilde{V}=V\circ \pi$.

Let $q:[a,b]\rightarrow H$ be a trajectory of a mechanical system with Lagrangian function $L:TH\rightarrow \R$ of the form \eqref{mechanical:lagrangian}. Then, the curve $q$ satisfies the equation
\begin{equation}\label{mechanical:equation}
    \nabla_{\dot{q}} \dot{q}(t) = -\text{grad} \ V (q(t)),
\end{equation}
where $\text{grad}$ is the gradient with respect to the metric on $H$.

The next result establishes that the gradient vector field $\widetilde{\text{grad }} \tilde{V}$, where $\widetilde{\text{grad }}$ is the gradient with respect to the metric on $G$, is a horizontal vector field.
\vspace{.3cm}
\begin{lemma}\label{grad:horizontal}
    If $\tilde{V}=V\circ \pi$ is the potential function induced by $V:H\rightarrow \R$, then the vector field $\widetilde{\text{grad }} \tilde{V}\in \Gamma(HG)$, where $\widetilde{\text{grad }}$ is the gradient with respect to the metric on $G$.
\end{lemma}
\vspace{-.5cm}
\begin{proof}
    By construction, since given a vertical vector field $Y\in VG$, we have that
$$\langle \widetilde{\text{grad }} \tilde{V}, Y \rangle = d\tilde{V}(Y)  = dV(T\pi(Y))=0.$$
\end{proof}
\vspace{-.5cm}
Due to the fact that the gradient vector field of the potential $\tilde{V}$ is horizontal, one might deduce that the horizontal lift $g=\tilde{q}$ of a curve $q$ satisfying \eqref{mechanical:equation} satisfies the mechanical equation
\begin{equation}\label{Lie:mechanical:equation}
    \tilde{\nabla}_{\dot{g}} \dot{g} = -\widetilde{\text{grad }} \tilde{V}(g(t)).
\end{equation}


Writing this equation entirely in the Lie algebra of $G$ is often impossible in real applications since the potential function $\tilde{V}$ typically does not possess any invariance property with respect to the group multiplication. Interesting applications in geometric control occur when the potential possesses partial symmetries \cite{goodman2024b-reduction}, \cite{colombo2023lie}, \cite{bloch2017optimal}.


\vspace{.2cm}

\begin{theorem}
    \label{thm:LP_mecg}
    Let $H$ be a Riemannian homogeneous space w.r.t a Lie group action by $G$ and $V:H\to \R$ a potential function. If $g: [a, b] \to G$ is a curve on $G$, $q(t)=\pi(g(t))$ the projection of $g$ on $H$, and $\xi(t) := (T_{g(t)} L_{g^{-1}(t)}) (\dot{g}(t))$, then the following statements are equivalent:
\begin{enumerate}
    \item The curve $g: [a, b] \to G$ is a horizontal trajectory of the mechanical system \eqref{Lie:mechanical:equation}, that is $\dot{g}(t)\in HG$ for all $t\in[a, b]$.
    \item $\xi\in \h$ on $[a, b]$ and satisfies
\begin{align}
    \dot{\xi} + \tilde{\nabla}_{\xi}^\g \xi = -(T_{g} L_{g^{-1}})\left(\widetilde{\text{grad }} \tilde{V}  (g(t))\right).\label{LP: mech}
\end{align}
\item If $q(t)=\pi(g(t))$ then $g(t)=\tilde{q}(t)$ and the curve $q$ satisfies \eqref{mechanical:equation}.
\end{enumerate}

\end{theorem}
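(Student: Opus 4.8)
The plan is to follow the architecture of the proof of Theorem~\ref{thm:LP_geo}, now carrying the horizontal potential force through every step, and to close the circle of implications by proving $(1)\Leftrightarrow(2)$ and $(1)\Leftrightarrow(3)$. As a preliminary, I would record that $\widetilde{\text{grad }}\tilde{V}$ is not only horizontal (Lemma~\ref{grad:horizontal}) but is in fact the horizontal lift of $\text{grad } V$. Indeed, for any $X\in\mathfrak{X}(H)$ the Riemannian submersion property gives $\langle (T\pi)(\widetilde{\text{grad }}\tilde{V}), X\rangle_H = \langle \widetilde{\text{grad }}\tilde{V}, \tilde{X}\rangle_G = d\tilde{V}(\tilde{X}) = dV(X) = \langle \text{grad } V, X\rangle_H$, so that $(T\pi)(\widetilde{\text{grad }}\tilde{V}) = \text{grad } V$; uniqueness of horizontal lifts then yields $\widetilde{\text{grad }}\tilde{V} = \widetilde{\text{grad } V}$.

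For $(1)\Leftrightarrow(2)$ I would argue purely by left-translation. Since $\text{Hor}_g = (T_eL_g)(\h)$, the velocity constraint $\dot{g}(t)\in HG$ is equivalent to $\xi(t)\in\h$. Applying Lemma~\ref{lemma2} with $X=\dot{g}$, hence $\eta=\xi$, gives $\tilde{\nabla}_{\dot{g}}\dot{g} = (T_eL_{g})\big(\dot{\xi} + \tilde{\nabla}^\g_\xi\xi\big)$. Inserting this into the mechanical equation \eqref{Lie:mechanical:equation} and applying $T_gL_{g^{-1}}$ transforms $\tilde{\nabla}_{\dot{g}}\dot{g} = -\widetilde{\text{grad }}\tilde{V}(g(t))$ exactly into \eqref{LP: mech}. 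Because left-translation is a diffeomorphism, each step is reversible, so the two statements are equivalent.

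For the equivalence with $(3)$ I would pass through the submersion $\pi$ and Proposition~\ref{pi:related:connections}. Assuming $(1)$, the curve $g$ is horizontal and projects to $q$, so by uniqueness of horizontal lifts $g=\tilde{q}$; applying $T\pi$ to \eqref{Lie:mechanical:equation} and using part~$(1)$ of Proposition~\ref{pi:related:connections} together with the preliminary identity $(T\pi)(\widetilde{\text{grad }}\tilde{V})=\text{grad } V$ produces $\nabla_{\dot{q}}\dot{q} = -\text{grad } V$, i.e.\ \eqref{mechanical:equation}. Conversely, assuming $(3)$, the lift $g=\tilde{q}$ has horizontal velocity, and part~$(2)$ of Proposition~\ref{pi:related:connections} (after extending $\dot{q}$ to a vector field on $H$ so the statement applies along the curve) gives $\mathcal{H}\tilde{\nabla}_{\dot{g}}\dot{g} = \widetilde{\nabla_{\dot{q}}\dot{q}} = -\widetilde{\text{grad } V} = -\widetilde{\text{grad }}\tilde{V}$.

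The main obstacle is exactly this last direction: Proposition~\ref{pi:related:connections} controls only the horizontal part of $\tilde{\nabla}_{\dot{g}}\dot{g}$, whereas recovering \eqref{Lie:mechanical:equation} requires its vertical part to vanish as well. I would settle this by invoking the general theory of Riemannian submersions (as was done for geodesics via \cite{o1967submersions}): the O'Neill integrability tensor is skew-symmetric on horizontal vectors, so the vertical component $\mathcal{V}\tilde{\nabla}_{\dot{g}}\dot{g}$ of the acceleration of a horizontal curve vanishes identically. Once $\mathcal{V}\tilde{\nabla}_{\dot{g}}\dot{g}=0$ is in hand, the horizontal identity above upgrades to the full equation $\tilde{\nabla}_{\dot{g}}\dot{g} = -\widetilde{\text{grad }}\tilde{V}$, completing $(3)\Rightarrow(1)$ and hence the theorem.
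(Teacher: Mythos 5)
Your proposal is correct, and for most of the theorem it coincides with the paper's own argument: the preliminary identity $(T\pi)(\widetilde{\text{grad }}\tilde{V})=\text{grad }V$, the equivalence $(1)\Leftrightarrow(2)$ via Lemma~\ref{lemma2} and left-translation, and the implication $(1)\Rightarrow(3)$ via Proposition~\ref{pi:related:connections} are exactly the steps the paper takes. Where you genuinely diverge is in $(3)\Rightarrow(1)$. You close that direction pointwise: since $g=\tilde{q}$ is horizontal, the vertical part of its acceleration equals $A_{\dot g}\dot g$ for the O'Neill integrability tensor $A$ of the submersion, which vanishes because $A$ is skew-symmetric on horizontal vectors; hence $\mathcal{V}\tilde{\nabla}_{\dot g}\dot g=0$, and the horizontal identity $\mathcal{H}\tilde{\nabla}_{\dot g}\dot g=-\widetilde{\text{grad }}\tilde{V}$ upgrades to the full equation \eqref{Lie:mechanical:equation}. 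The paper argues dynamically instead: the second-order vector field governing \eqref{Lie:mechanical:equation} is the geodesic spray minus the vertical lift of $\widetilde{\text{grad }}\tilde{V}$, both tangent to $HG$, so solutions with horizontal initial data remain horizontal; since such solutions project onto solutions of \eqref{mechanical:equation}, uniqueness of solutions forces the horizontal lift of $q$ to be the solution through the lifted initial condition. Your route is more local and self-contained in that it needs no appeal to existence and uniqueness of flows, at the price of importing the $A$-tensor machinery from \cite{o1967submersions}; the paper's route reuses only facts it has already established (the geodesic flow preserves $HG$, and vertical lifts of horizontal fields are tangent to $HG$). Both arguments are valid, and your only implicit technicality --- extending $\dot g$ to a horizontal vector field so the tensor identity applies along the curve --- is harmless precisely because $A$ is tensorial.
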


\begin{proof}
    The equivalence of the statements (1) and (2) is a direct consequence of \eqref{Cov-to-cov} and of equation \eqref{Lie:mechanical:equation}. Indeed,
    $$\tilde{\nabla}_{\dot{g}} \dot{g} = T_{e}L_{g}\left( \dot{\xi} + \tilde{\nabla}_{\xi}^\g \xi\right)$$
    from where it is clear that $\tilde{\nabla}_{\dot{g}} \dot{g} = -\widetilde{\text{grad }} \tilde{V}(g(t))$ if and only if $\dot{\xi} + \tilde{\nabla}_{\xi}^\g \xi = -(T_{g} L_{g^{-1}})\left(\widetilde{\text{grad }} \tilde{V}  (g(t))\right)$ since $$(T_{e}L_{g})(T_{g} L_{g^{-1}})\left(\widetilde{\text{grad }} \tilde{V}  (g(t))\right) = \widetilde{\text{grad }} \tilde{V}  (g(t)).$$
    
    The equivalence of the statements (1) and (3) can be seen firstly from the fact that if $q(t)=\pi(g(t))$ then $g(t)=\tilde{q}(t)$ if and only if $g$ is horizontal. 
    
    Secondly, using Lemma \ref{grad:horizontal} stating that $\widetilde{\text{grad }} \tilde{V}$ is a horizontal vector field and since by construction $\tilde{V}=V\circ\pi$, we conclude that $(T\pi)(\widetilde{\text{grad }} \tilde{V}) = \text{grad } V$. Thus, $\widetilde{\text{grad }} \tilde{V}$ is the horizontal lift of $\text{grad } V$. Thus, if $g$ satisfies equation \eqref{Lie:mechanical:equation}, then, by Proposition \ref{pi:related:connections}(2), the horizontal lift of $\nabla_{\dot{q}} \dot{q}(t)$ must coincide with the horizontal lift of $-\widetilde{\text{grad }} \tilde{V}$. Hence, equation \eqref{mechanical:equation} must hold.
    
    Conversely, suppose that equation \eqref{mechanical:equation} holds. 
    By Theorem \ref{thm:LP_geo}, we know that the geodesic vector field of $\tilde{\nabla}$ is tangent to $HG$, since geodesics with initial velocity in $HG$, remain in $HG$ for all time. Furthermore, from \cite{stratoglou2023virtual}, the vector field whose trajectories are the solution of equation \eqref{Lie:mechanical:equation} has the form
    $$\tilde{\Gamma} = G - (\widetilde{\text{grad }} \tilde{V}(g(t)))^{\textbf{v}},$$
    where $G$ is the geodesic vector field of $\tilde{\nabla}$ and $(\widetilde{\text{grad }} \tilde{V}(g(t)))^{\textbf{v}}$ is the vertical lift of the vector field $\widetilde{\text{grad }} \tilde{V}(g(t))$. Thus, $\Gamma$ is the sum of two vector fields that are tangent to $HG$, implying that $\Gamma$ is itself tangent to $HG$. Consequently, if the trajectories of $\Gamma$, that is, of equation \eqref{Lie:mechanical:equation}, are horizontal at one point, they are horizontal at all points. In particular, since the trajectories of equation \eqref{Lie:mechanical:equation} project onto trajectories of equation \eqref{mechanical:equation}, we msut have that the horizontal lift $g = \tilde{q}$ must be a solution of \eqref{Lie:mechanical:equation}.
\end{proof}


\subsection{Nonholonomic systems on Riemannian homogeneous spaces}
A distribution $\mathcal{D}$ on $H$ is said to be $G$-invariant if $\mathcal{D}_{g\cdot \pi(e)}=(\Phi_{g})_{*}(\D_{\pi(e)})$. We may consider the horizontal lift of the distribution $\mathcal{D}$ to $G$. Given $g\in G$ such that $\pi(g)=q$, we have that
$\tilde{\mathcal{D}}_{g} = \{ \tilde{v}_{g}\in \text{Hor}_{g} \ |  (T_{g}\pi)(\tilde{v}_{g})\in \mathcal{D}_{q}\ \}$. Essentially, $\tilde{\mathcal{D}}_{g}$ is a subspace of $\text{Hor}_{g}$ composed of the horizontal lift of vectors in $\mathcal{D}_{q}$.

\vspace{.1cm}

\begin{proposition}
    The distribution $\tilde{\mathcal{D}}_{g}$ is left-invariant.
\end{proposition}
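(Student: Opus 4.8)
The plan is to verify the defining property of a left-invariant distribution directly: I will show $T_{h}L_{g}(\tilde{\mathcal{D}}_{h}) = \tilde{\mathcal{D}}_{gh}$ for all $g,h\in G$ (equivalently, that every fibre is $T_{e}L_{g}(\tilde{\mathcal{D}}_{e})$). Two structural facts do the work: the horizontal bundle is left-invariant, $\text{Hor}_{gh} = T_{h}L_{g}(\text{Hor}_{h})$, since the metric on $G$ is left-invariant; and the action intertwines with left translation, $\Phi_{g}\circ\pi = \pi\circ L_{g}$. Before using the hypothesis I would promote the $G$-invariance of $\mathcal{D}$, which the paper states only at the base point $\pi(e)$, to arbitrary points: writing $p=\pi(h)=\Phi_{h}(\pi(e))$ and using $\Phi_{g}\circ\Phi_{h}=\Phi_{gh}$ gives $\mathcal{D}_{\pi(gh)} = (\Phi_{g})_{*}(\mathcal{D}_{\pi(h)})$ for all $g,h$.

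The core step is to take an arbitrary $\tilde{v}_{h}\in\tilde{\mathcal{D}}_{h}$ and check that $T_{h}L_{g}(\tilde{v}_{h})$ satisfies both membership conditions for $\tilde{\mathcal{D}}_{gh}$. Horizontality, $T_{h}L_{g}(\tilde{v}_{h})\in\text{Hor}_{gh}$, is immediate from left-invariance of $\text{Hor}$. For the projection condition I would differentiate $\Phi_{g}\circ\pi = \pi\circ L_{g}$ at $h$ to obtain the key identity $T_{gh}\pi\circ T_{h}L_{g} = T_{\pi(h)}\Phi_{g}\circ T_{h}\pi$. Evaluating at $\tilde{v}_{h}$ and combining $T_{h}\pi(\tilde{v}_{h})\in\mathcal{D}_{\pi(h)}$ with the promoted invariance yields $T_{gh}\pi(T_{h}L_{g}(\tilde{v}_{h})) = T_{\pi(h)}\Phi_{g}(T_{h}\pi(\tilde{v}_{h}))\in\mathcal{D}_{\pi(gh)}$, which is exactly what is required, giving the inclusion $T_{h}L_{g}(\tilde{\mathcal{D}}_{h})\subseteq\tilde{\mathcal{D}}_{gh}$.

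To finish I would promote the inclusion to an equality by a dimension count: $T_{h}L_{g}$ is a linear isomorphism and $T_{h}\pi$ restricts to an isomorphism $\text{Hor}_{h}\to T_{\pi(h)}H$ (because $\pi$ is a Riemannian submersion), so $\dim\tilde{\mathcal{D}}_{h}=\dim\mathcal{D}_{\pi(h)}=\dim\mathcal{D}_{\pi(gh)}=\dim\tilde{\mathcal{D}}_{gh}$, the middle equality holding since $\Phi_{g}$ is a diffeomorphism. Both sides of the inclusion then have equal dimension, so equality follows. I expect the only genuine subtlety to be bookkeeping with base points, namely ensuring the intertwining identity and the invariance of $\mathcal{D}$ are applied at the matching points $\pi(h)$ and $\pi(gh)=\Phi_{g}(\pi(h))$; the rest is a direct chase through these two identities together with the promoted $G$-invariance.
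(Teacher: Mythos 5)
Your proof is correct and follows essentially the same route as the paper's: both arguments rest on the left-invariance of the horizontal bundle, the intertwining relation $\Phi_{g}\circ\pi=\pi\circ L_{g}$, the $G$-invariance of $\mathcal{D}$, and the fact that $T_{g}\pi$ restricts to an isomorphism from $\text{Hor}_{g}$ onto $T_{\pi(g)}H$. The only cosmetic difference is how the final equality is clinched — the paper identifies $\tilde{\mathcal{D}}_{g}$ with $T_{e}L_{g}(\tilde{\mathcal{D}}_{e})$ via injectivity of $T_{g}\pi|_{\text{Hor}_{g}}$ applied to two subspaces with the same image, whereas you establish one inclusion and conclude by a dimension count.
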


\vspace{-.5cm}

\begin{proof}
Let $\frakd = \tilde{\mathcal{D}}_{e}$. Note that $T_{e}\pi (\frakd) = \D_{\pi(e)}$ by definition and also that, by  $G$-invariance of $\D$, we have that $\D_{\pi(g)} = (T_{\pi(e)}\Phi_{g})(\D_{\pi(e)})$ for all $g \in G$. Thus $\D_{\pi(g)}=(T_{e}(\Phi_{g}\circ \pi))(\frakd)$. In addition, since $\pi$ commutes with the action $\Phi_{g}$ and the left action, we also have that $\D_{\pi(g)} = (T_{e}(\pi\circ L_{g}))(\frakd)$. Equivalently,
$$\D_{\pi(g)} = T_{g}\pi (T_{e}L_{g} (\frakd)).$$
But notice that $T_{e}L_{g} (\frakd)\in \text{Hor}_{g}$ since $HG$ is left invariant, and that $T_{g}\pi|_{\text{Hor}_{g}}$ maps $\tilde{\mathcal{D}}_{g}$ isomorphically to $\D_{\pi(g)}$. Both these facts imply that $\tilde{\mathcal{D}}_{g}=T_{e}L_{g} (\frakd)$.\end{proof}

As a consequence of the previous proposition, there exists a subspace $\frakd$ of the Lie algebra $\g$ such that
$\tilde{\mathcal{D}}_{g}=T_{e}L_{g}(\frakd)$.

Throughout this section we will consider a $G$-invariant distribution $\D$ on $H$ and since the horizontal lift of $\D$ is a left-invariant distribution $\tilde{\mathcal{D}}$ on $G$,  $\frakd$ will be the restriction to the identity of $\tilde{\mathcal{D}}$. The orthogonal complement of $\frakd$ with respect to the inner product on $\g$ will be denoted by $\frakd^{\bot}$. Note that $\frakd^{\bot}$ has non-zero intersections with both the horizontal space $\h=H_{e}G$ and the vertical space $\mathfrak{s}=\text{Ver}_{e}$, while $\frakd\subseteq \h$.

Consider on $G$, the orthogonal projections $\tilde{P}:TG\to \tilde{\mathcal{D}}$ and $\tilde{Q}:TG\to \tilde{\mathcal{D}}^{\bot}$. With this projections, we are able to define the nonholonomic connection $\tilde{\nabla}^{nh}$ given by the analogous expression to that in \eqref{nh:connection}:
$$\tilde{\nabla}^{nh}_X Y =\tilde{\nabla}_{X} Y + (\tilde{\nabla}_{X} \tilde{Q})(Y).$$ Similarly, on the manifold $H$, we can define the nonholonomic connection $\nabla^{nh}$ with respect to the Riemannian connection $\nabla$ and the orthogonal projections $\mathcal{P}:TH\to \D$ and $\mathcal{Q}:TH\to \D^{\bot}$. We have the following results relating geodesics with respect to both nonholonomic connections.

\vspace{.2cm}

\begin{lemma}[\cite{bullo2019geometric}, Section 2]\label{Lewis:lemma}
    Given a Riemannian manifold $Q$, letting $\nabla$ be the Levi-Civita connection and $\D$ a non-integrable distribution then a curve $q:[a,b]\to Q$ is a geodesic of the nonholonomic connection $\nabla^{nh}$ if and only if
    $$\nabla_{\dot{q}}\dot{q} \in \D^{\bot} \text{ and } \dot{q} \in \D.
    $$
\end{lemma}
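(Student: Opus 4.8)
The plan is to prove the two directions of the equivalence by unwinding the definition of the nonholonomic connection $\nabla^{nh}_X Y = \nabla_X Y + (\nabla_X \mathcal{Q})(Y)$ and exploiting the fact that $\mathcal{P}$ and $\mathcal{Q}$ are complementary orthogonal projectors onto $\mathcal{D}$ and $\mathcal{D}^{\perp}$. First I would write out the defining condition for $q$ to be a geodesic of the nonholonomic connection, namely $\nabla^{nh}_{\dot q}\dot q = 0$, which along the curve reads
\begin{equation*}
\nabla_{\dot q}\dot q + (\nabla_{\dot q}\mathcal{Q})(\dot q) = 0.
\end{equation*}
The key algebraic identity I expect to use is that, since $\mathcal{Q}$ is a projector, differentiating $\mathcal{Q}\circ \mathcal{Q} = \mathcal{Q}$ (or equivalently using $\mathcal{P}+\mathcal{Q}=\mathrm{id}$) along the curve lets me relate $(\nabla_{\dot q}\mathcal{Q})(\dot q)$ to the projections of $\nabla_{\dot q}\dot q$. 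The cleanest route is to observe that $(\nabla_{\dot q}\mathcal{Q})(\dot q) = \nabla_{\dot q}(\mathcal{Q}(\dot q)) - \mathcal{Q}(\nabla_{\dot q}\dot q)$ by definition of the covariant derivative of the tensor $\mathcal{Q}$.

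Next I would split into the two implications using the orthogonal decomposition $T_qQ = \mathcal{D}_q \oplus \mathcal{D}^{\perp}_q$. For the forward direction, assuming $\nabla^{nh}_{\dot q}\dot q = 0$, I would project the geodesic equation onto $\mathcal{D}$ and onto $\mathcal{D}^{\perp}$ separately. The term $(\nabla_{\dot q}\mathcal{Q})(\dot q)$ lands in $\mathcal{D}^{\perp}$ because $\mathcal{Q}$ takes values in $\mathcal{D}^{\perp}$ and differentiation preserves this up to a correction controlled by the complementary projector; applying $\mathcal{P}$ to the equation should annihilate that term and force $\mathcal{P}(\nabla_{\dot q}\dot q) = 0$, i.e. $\nabla_{\dot q}\dot q \in \mathcal{D}^{\perp}$. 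Separately, the hypothesis that $q$ is a nonholonomic trajectory carries the constraint $\dot q \in \mathcal{D}$ (this is built into the nonholonomic setting, where admissible curves are constrained to $\mathcal{D}$), so I would verify that $\dot q \in \mathcal{D}$ is consistent with and preserved by the nonholonomic equations. For the converse, assuming $\dot q \in \mathcal{D}$ and $\nabla_{\dot q}\dot q \in \mathcal{D}^{\perp}$, I would compute $(\nabla_{\dot q}\mathcal{Q})(\dot q)$ directly: since $\mathcal{Q}(\dot q) = 0$ (because $\dot q \in \mathcal{D} = \ker\mathcal{Q}$), the identity above gives $(\nabla_{\dot q}\mathcal{Q})(\dot q) = -\mathcal{Q}(\nabla_{\dot q}\dot q) = -\nabla_{\dot q}\dot q$, where the last equality uses $\nabla_{\dot q}\dot q \in \mathcal{D}^{\perp}$ so that $\mathcal{Q}$ acts as the identity on it. Substituting back yields $\nabla^{nh}_{\dot q}\dot q = \nabla_{\dot q}\dot q - \nabla_{\dot q}\dot q = 0$, as required.

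I expect the main subtlety to lie in the bookkeeping of which projector annihilates or fixes each term, and in justifying that $\mathcal{Q}(\dot q) = 0$ along the whole curve rather than merely at a point: this requires knowing that the constraint $\dot q \in \mathcal{D}$ is maintained along nonholonomic trajectories, which is precisely the content of the nonholonomic equations of motion recalled in \eqref{nonholonomic:mechanical:equation}. The cleanest presentation proves the converse direction first, since there the computation $(\nabla_{\dot q}\mathcal{Q})(\dot q) = -\mathcal{Q}(\nabla_{\dot q}\dot q)$ is immediate from $\mathcal{Q}(\dot q)=0$, and then obtains the forward direction by reading the same identity backwards together with the uniqueness of the orthogonal decomposition. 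Since this is stated as a known result from \cite{bullo2019geometric}, the proof should be short and rely entirely on the tensorial definition of $\nabla^{nh}$ and the projector identities $\mathcal{P}+\mathcal{Q}=\mathrm{id}$, $\mathcal{Q}|_{\mathcal{D}}=0$, and $\mathcal{Q}|_{\mathcal{D}^{\perp}}=\mathrm{id}$.
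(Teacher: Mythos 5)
The paper does not actually prove this lemma: it is imported verbatim from \cite{bullo2019geometric} with only a citation, so there is no in-house argument to compare yours against and I can only assess the proposal on its own terms. Your ``if'' direction is correct and is the standard computation: from $\mathcal{Q}(\dot q)\equiv 0$ along the curve one gets
$(\nabla_{\dot q}\mathcal{Q})(\dot q)=\nabla_{\dot q}\bigl(\mathcal{Q}(\dot q)\bigr)-\mathcal{Q}(\nabla_{\dot q}\dot q)=-\mathcal{Q}(\nabla_{\dot q}\dot q)=-\nabla_{\dot q}\dot q$,
and substituting into the definition of $\nabla^{nh}$ gives $\nabla^{nh}_{\dot q}\dot q=0$.

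The gap is in the ``only if'' direction. Your justification that $(\nabla_{\dot q}\mathcal{Q})(\dot q)$ ``lands in $\D^{\perp}$ because $\mathcal{Q}$ takes values in $\D^{\perp}$ and differentiation preserves this up to a correction'' is not valid for an arbitrary curve: the $\D$-component of this term is $\mathcal{P}\bigl(\nabla_{\dot q}(\mathcal{Q}\dot q)\bigr)$, which vanishes precisely when $\mathcal{Q}\dot q\equiv 0$, i.e.\ you need $\dot q\in\D$ \emph{before} you can run the projection argument, and $\dot q\in\D$ is one of the two conclusions you are supposed to establish. In fact the implication cannot be proved as literally stated: $\nabla^{nh}$ is an affine connection, so it admits a geodesic with any initial velocity, including one outside $\D$, and such a geodesic violates the conclusion $\dot q\in\D$. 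The statement has to be read with the constraint $\dot q\in\D$ (or at least $\dot q(a)\in\D$) as a standing hypothesis --- which is exactly how the paper invokes it in Lemma \ref{nh:trajectory:lift}, where ``geodesic of $\nabla^{nh}$ and the constraint distribution'' packages both conditions. If one only assumes $\dot q(a)\in\D$, the missing ingredient is the geodesic invariance of $\D$ under $\nabla^{nh}$, which follows from the restriction property $\nabla^{nh}_X Y=\mathcal{P}(\nabla_X Y)\in\Gamma(\D)$ for $Y\in\Gamma(\D)$; your appeal to equation \eqref{nonholonomic:mechanical:equation} does not supply this, since that equation describes the forced dynamics and itself presupposes that the constraint is preserved. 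Once $\dot q\in\D$ is granted, your forward computation reducing the geodesic equation to $\mathcal{P}(\nabla_{\dot q}\dot q)=0$ is correct.
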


Then, we have the following result:

\vspace{.2cm}

\begin{lemma}\label{nh:trajectory:lift}
    A curve $q:[a,b]\to H$ is a geodesic associated with $\nabla^{nh}$ and the constraint distribution $\mathcal{D}$ if and only if its tangent lift $\tilde{q}$ is a geodesic with respect to $\tilde{\nabla}^{nh}$ and the constraint distribution $\tilde{\mathcal{D}}$.
\end{lemma}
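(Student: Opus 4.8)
The plan is to reduce both sides of the equivalence to the characterization of nonholonomic geodesics given by Lemma \ref{Lewis:lemma}, applied once on $H$ with the pair $(\nabla,\D)$ and once on $G$ with the pair $(\tilde{\nabla},\tilde{\D})$, and then to transport the two resulting conditions across the submersion $\pi$. Explicitly, Lemma \ref{Lewis:lemma} tells us that $q$ is a $\nabla^{nh}$-geodesic if and only if $\dot q\in\D$ and $\nabla_{\dot q}\dot q\in\D^{\bot}$, while the horizontal lift $\tilde q$ is a $\tilde{\nabla}^{nh}$-geodesic if and only if $\dot{\tilde q}\in\tilde{\D}$ and $\tilde{\nabla}_{\dot{\tilde q}}\dot{\tilde q}\in\tilde{\D}^{\bot}$. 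It therefore suffices to show separately that the velocity conditions are equivalent and that the acceleration conditions are equivalent.

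The velocity equivalence is immediate from the definition of the horizontal lift: since $\tilde q$ is the horizontal lift of $q$, its velocity $\dot{\tilde q}$ is horizontal and satisfies $(T\pi)(\dot{\tilde q})=\dot q$; and by the very definition of $\tilde{\D}$ as the horizontal lift of $\D$, a horizontal vector lies in $\tilde{\D}$ precisely when its image under $T\pi$ lies in $\D$. Hence $\dot{\tilde q}\in\tilde{\D}$ if and only if $\dot q\in\D$.

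For the acceleration equivalence I would first extend $\dot q$ to a vector field on a neighborhood of the curve in $H$ and take its horizontal lift, so that Proposition \ref{pi:related:connections}(2) applies and, after restriction to the curve, yields
\[
\mathcal{H}\!\left(\tilde{\nabla}_{\dot{\tilde q}}\dot{\tilde q}\right)=\widetilde{\nabla_{\dot q}\dot q},
\]
the horizontal lift of $\nabla_{\dot q}\dot q$. The decisive structural observation is that $\tilde{\D}\subseteq HG$, because $\frakd\subseteq\h$; consequently the vertical bundle $VG$ is entirely contained in $\tilde{\D}^{\bot}$, so a vector of $TG$ lies in $\tilde{\D}^{\bot}$ if and only if its horizontal component lies in $\tilde{\D}^{\bot}\cap HG$. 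Thus $\tilde{\nabla}_{\dot{\tilde q}}\dot{\tilde q}\in\tilde{\D}^{\bot}$ is equivalent to $\widetilde{\nabla_{\dot q}\dot q}\in\tilde{\D}^{\bot}\cap HG$. Finally, since $\pi$ is a Riemannian submersion the horizontal lift $T_{\pi(g)}H\to\text{Hor}_g$ is a linear isometry carrying $\D$ onto $\tilde{\D}$, hence carrying $\D^{\bot}$ (the orthogonal complement inside $TH$) onto $\tilde{\D}^{\bot}\cap HG$ (the orthogonal complement of $\tilde{\D}$ inside $\text{Hor}_g$). Therefore $\widetilde{\nabla_{\dot q}\dot q}\in\tilde{\D}^{\bot}\cap HG$ if and only if $\nabla_{\dot q}\dot q\in\D^{\bot}$. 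Chaining these equivalences together with the velocity equivalence and Lemma \ref{Lewis:lemma} gives the claim.

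The main obstacle I anticipate is the bookkeeping of orthogonal complements in the two different ambient spaces: $\D^{\bot}$ is taken inside $TH$, whereas $\tilde{\D}^{\bot}$ is taken inside $TG$ and therefore swallows the entire vertical bundle. The key to keeping the argument clean is precisely the fact that $\tilde{\D}$ sits inside the horizontal bundle, which decouples the vertical part of the covariant acceleration (always harmless for the $\tilde{\D}^{\bot}$ test) from its horizontal part (governed by the submersion isometry). The only other point requiring care is justifying the identity $\mathcal{H}(\tilde{\nabla}_{\dot{\tilde q}}\dot{\tilde q})=\widetilde{\nabla_{\dot q}\dot q}$ along a curve rather than for globally defined vector fields, which is handled by the standard extend-then-restrict argument and should be noted explicitly.
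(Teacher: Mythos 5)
Your proposal is correct and follows essentially the same route as the paper's proof: both reduce the statement to Lemma \ref{Lewis:lemma} applied on $H$ and on $G$, transfer the velocity condition via the definition of $\tilde{\mathcal{D}}$ as the horizontal lift of $\mathcal{D}$, and transfer the acceleration condition via Proposition \ref{pi:related:connections} together with the observation that $\tilde{\mathcal{D}}\subseteq HG$ forces $VG\subseteq\tilde{\mathcal{D}}^{\bot}$, so only the horizontal part of $\tilde{\nabla}_{\dot{\tilde q}}\dot{\tilde q}$ matters. Your packaging of the two directions as a single chain of equivalences, with the explicit remark that the submersion isometry carries $\mathcal{D}^{\bot}$ onto $\tilde{\mathcal{D}}^{\bot}\cap HG$, is a slightly tidier rendering of the same argument.
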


\begin{proof}
    Suppose $q(t), t\in[a,b]$ is a curve in $H$ such that its tangent lift $\tilde{q}$ is a geodesic with respect to $\tilde{\nabla}^{nh}$ and the constraint distribution $\tilde{\mathcal{D}}$. Thus $\tilde{\nabla}^{nh}_{\dot{\tilde{q}}}\dot{\tilde{q}}=0$ and $\dot{\tilde{q}}\in\tilde{\mathcal{D}}$.
    
    Then, by Lemma \ref{Lewis:lemma}, we have that
    $\tilde{\nabla}_{\dot{\tilde{q}}}\dot{\tilde{q}}\in \tilde{\mathcal{D}}^{\bot}$ and $\dot{\tilde{q}}\in\tilde{\mathcal{D}}$. Using Proposition \ref{pi:related:connections} and also $T\pi(\tilde{\mathcal{D}}^{\bot})=\D^{\bot}$, we conclude that $\nabla_{\dot{q}}\dot{q}\in \mathcal{D}^{\bot}$ and $\dot{q}\in \D$. Finally, using Lemma \ref{Lewis:lemma} again, we deduce that $q$ satisfies $\nabla^{nh}_{\dot{q}}\dot{q}=0$.

    Conversely, if $q$ satisfies $\nabla^{nh}_{\dot{q}}\dot{q}=0$ then, by Lemma \ref{Lewis:lemma}, it also satisfies $\nabla_{\dot{q}}\dot{q}\in \mathcal{D}^{\bot}$ and $\dot{q}\in \D$. Therefore, the horizontal lift of $\nabla_{\dot{q}}\dot{q}$, denoted by $\widetilde{\nabla_{\dot{q}}\dot{q}} \ $ belongs to $\tilde{\mathcal{D}}^{\bot}$. But since by the second statement of Proposition \ref{pi:related:connections}, we have that $\tilde{\nabla}_{\dot{\tilde{q}}}\dot{\tilde{q}} = \widetilde{\nabla_{\dot{q}}{\dot{q}}}+V$
    with $V\in VG$, and noting that $VG\subseteq \tilde{\mathcal{D}}^{\bot}$, we must have that $\tilde{\nabla}_{\dot{\tilde{q}}}\dot{\tilde{q}} \in \tilde{\mathcal{D}}^{\bot}$. Obviously, we have also that $\dot{\tilde{q}}\in \tilde{\mathcal{D}}$.
\end{proof}

We have the following result relating corresponding nonholonomic trajectories in each space.

\vspace{.2cm}

\begin{theorem}
    Let $H$ be a Riemannian homogeneous space with respect to a Lie group action by $G$, $\tilde{\mathcal{D}}\subseteq HG$ a left-invariant distribution on $G$ and $V:H\to \R$ a potential function. If $g: [a, b] \to G$ is a curve on $G$, $q(t)=\pi(g(t))$ the projection of $g$ on $H$ and $\xi(t) := (T_{g(t)} L_{g^{-1}(t)}) (\dot{g}(t))$, then the following statements are equivalent:
    \begin{enumerate}
        \item $g:[a,b]\to G$ is a nonholonomic trajectory associated to the mechanical Lagrangian \eqref{mechanical:lagrangian} and the distribution $\tilde{\mathcal{D}}$.
        \item $\xi\in \frakd$ on $[a,b]$ and satisfies
    $$\dot{\xi} + \tilde{\nabla}_{\xi}^{\frakd} \xi = -\mathfrak{P}\left((T_{g} L_{g^{-1}})\left(\widetilde{\text{grad }} \tilde{V}  (g(t))\right) \right),$$
    where $\mathfrak{P}:\mathfrak{g}\to \frakd$ is the projection associated with the decomposition $\g=\frakd \oplus \frakd^{\bot}$ and $\tilde{\nabla}^{\frakd}$ is the $\frakd$-connection associated with $\tilde{\nabla}^{\g}$ and $\frakd$ defined by \eqref{d:connection}.
        \item If $q(t)=\pi(g(t))$, then $\dot{q}\in \D=T\pi(\tilde{\mathcal{D}})$, $g(t)=\tilde{q}(t)$ and $q$ satisfies equations \eqref{nonholonomic:mechanical:equation} associated with the $G$-invariant metric on $H$ and the distribution $\D$.
    \end{enumerate} 
\end{theorem}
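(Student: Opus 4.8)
The plan is to prove the three-way equivalence by establishing (1)$\Leftrightarrow$(2) through reduction to the Lie algebra and (1)$\Leftrightarrow$(3) through the Riemannian submersion $\pi$, reusing the machinery built for the geodesic and mechanical cases. For (1)$\Leftrightarrow$(2) I would start from the coordinate-free nonholonomic equation on $G$, namely $\tilde{\nabla}^{nh}_{\dot g}\dot g + \tilde P(\widetilde{\text{grad }}\tilde V(g))=0$ together with the constraint $\dot g\in\tilde{\mathcal{D}}$, the latter being equivalent to $\xi=g^{-1}\dot g\in\frakd$ since $\tilde{\mathcal{D}}_g=T_eL_g(\frakd)$. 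The key computation is to reduce $\tilde{\nabla}^{nh}_{\dot g}\dot g$: from $\tilde{\nabla}^{nh}_XY=\tilde{\nabla}_XY+(\tilde{\nabla}_X\tilde Q)(Y)$ and the fact that $\tilde Q(\dot g)\equiv 0$ along the constrained curve one obtains $\tilde{\nabla}^{nh}_{\dot g}\dot g=\tilde P(\tilde{\nabla}_{\dot g}\dot g)$. I would then apply the cov-to-cov identity \eqref{Cov-to-cov} of Lemma \ref{lemma2} to write $\tilde{\nabla}_{\dot g}\dot g=T_eL_g(\dot\xi+\tilde{\nabla}^\g_\xi\xi)$ and use the left-invariance intertwining $\tilde P\circ T_eL_g=T_eL_g\circ\mathfrak P$, together with $\mathfrak P(\dot\xi)=\dot\xi$ (because $\xi$ stays in $\frakd$) and $\mathfrak P(\tilde{\nabla}^\g_\xi\xi)=\tilde{\nabla}^\frakd_\xi\xi$ from \eqref{d:connection}. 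The potential term reduces identically, $\tilde P(\widetilde{\text{grad }}\tilde V)=T_eL_g\,\mathfrak P\big((T_gL_{g^{-1}})(\widetilde{\text{grad }}\tilde V)\big)$, so that dividing out the isomorphism $T_eL_g$ yields exactly the reduced equation in (2).

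For (1)$\Rightarrow$(3) I would note that $\dot g\in\tilde{\mathcal{D}}\subseteq HG$ forces $g$ to be horizontal, hence $g=\tilde q$ and $\dot q=T\pi(\dot g)\in\D=T\pi(\tilde{\mathcal{D}})$. Using $\tilde{\nabla}^{nh}_{\dot g}\dot g=\tilde P(\tilde{\nabla}_{\dot g}\dot g)$ again, equation (1) is equivalent to $\tilde{\nabla}_{\dot g}\dot g+\widetilde{\text{grad }}\tilde V\in\tilde{\mathcal{D}}^\perp$. I would push this forward by $T\pi$: by Proposition \ref{pi:related:connections}(1) in its along-a-curve form $T\pi(\tilde{\nabla}_{\dot g}\dot g)=\nabla_{\dot q}\dot q$, by Lemma \ref{grad:horizontal} and $\tilde V=V\circ\pi$ one has $T\pi(\widetilde{\text{grad }}\tilde V)=\text{grad }V$, and $T\pi(\tilde{\mathcal{D}}^\perp)=\D^\perp$; projecting with $\mathcal P$ and using $\nabla^{nh}_{\dot q}\dot q=\mathcal P(\nabla_{\dot q}\dot q)$ recovers \eqref{nonholonomic:mechanical:equation}. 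For the converse (3)$\Rightarrow$(1) I would reverse the argument: from \eqref{nonholonomic:mechanical:equation} deduce $\nabla_{\dot q}\dot q+\text{grad }V\in\D^\perp$, take horizontal lifts via Proposition \ref{pi:related:connections}(2) to write $\tilde{\nabla}_{\dot g}\dot g=\widetilde{\nabla_{\dot q}\dot q}+\mathcal V(\tilde{\nabla}_{\dot g}\dot g)$, and observe that $\widetilde{\nabla_{\dot q}\dot q}+\widetilde{\text{grad }}\tilde V$ is the horizontal lift of an element of $\D^\perp$ (hence lies in $\tilde{\mathcal{D}}^\perp$) while the leftover vertical term lies in $VG\subseteq\tilde{\mathcal{D}}^\perp$; together with $\dot g\in\tilde{\mathcal{D}}$ this is precisely (1). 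This step mirrors the proof of Lemma \ref{nh:trajectory:lift} with the gradient term added, and notably needs no separate dynamical "stays horizontal" argument because $g=\tilde q$ is horizontal by hypothesis in (3).

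The main obstacle I anticipate is the bookkeeping of the several simultaneous orthogonal splittings and the compatibility of the projections. On $\g$ we use $\g=\frakd\oplus\frakd^\perp$, but $\frakd^\perp$ straddles both $\h$ and $\s$, while on $TG$ the submersion argument requires the entire vertical bundle $VG$ to sit inside $\tilde{\mathcal{D}}^\perp$. Verifying $T\pi(\tilde{\mathcal{D}}^\perp)=\D^\perp$ and the left-invariance intertwining $\tilde P\circ T_eL_g=T_eL_g\circ\mathfrak P$ (the analog of the relations recalled for $\mathfrak Q$ in Section \ref{sec: background_Lie}) are the delicate points; a further technical care point is that Proposition \ref{pi:related:connections}, stated for $\pi$-related vector fields, must be invoked along the curve $q$, which is justified by locally extending $\dot q$ to a vector field in the usual way.
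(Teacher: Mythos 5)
Your proposal is correct, and the reductions (1)$\Leftrightarrow$(2) and (1)$\Rightarrow$(3) follow the paper's proof essentially verbatim: the same use of $\tilde{\nabla}^{nh}_{\dot g}\dot g=\tilde P(\tilde{\nabla}_{\dot g}\dot g)$, Lemma \ref{lemma2}, the intertwining $\tilde P\circ T_eL_g=T_eL_g\circ\mathfrak P$, and the push-forward of the Lewis-type characterization $\tilde{\nabla}_{\dot g}\dot g+\widetilde{\text{grad }}\tilde V\in\tilde{\mathcal{D}}^{\bot}$ through $T\pi$. Where you genuinely diverge is the converse (3)$\Rightarrow$(1): the paper introduces an auxiliary curve $h$ solving the nonholonomic equation on $G$ with the initial data of $g$, projects it to the unique solution on $H$, and concludes $h=g$ by uniqueness of solutions and of horizontal lifts; you instead reverse the forward argument directly, lifting $\nabla_{\dot q}\dot q+\text{grad }V\in\D^{\perp}$ horizontally and absorbing the vertical discrepancy $\mathcal V(\tilde{\nabla}_{\dot g}\dot g)$ into $\tilde{\mathcal{D}}^{\perp}$, exactly as in the converse half of Lemma \ref{nh:trajectory:lift}. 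Your route is arguably cleaner — it avoids any appeal to ODE existence/uniqueness and makes the symmetry between the two directions explicit — at the price of having to verify the two structural facts you correctly flag: that the horizontal lift of $\D^{\perp}$ lands in $\tilde{\mathcal{D}}^{\perp}$ (immediate from $\pi$ being a Riemannian submersion and $\tilde{\mathcal{D}}\subseteq HG$) and that $VG\subseteq\tilde{\mathcal{D}}^{\perp}$. The paper's reconstruction argument, by contrast, trades these checks for the standard uniqueness machinery and foreshadows the reconstruction viewpoint used later for the closed-loop system. Both are valid; no gap.
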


\begin{proof}
    We first prove the equivalence of the statements (1) and (2) and later the equivalence between (1) and (3). If (1) holds, then the curve $g$ satisfies
    $$\tilde{\nabla}^{nh}_{\dot{g}} \dot{g} = \tilde{P}(\tilde{\nabla}_{\dot{g}} \dot{g})$$
    which implies that $g$ satisfies
    \begin{equation*}
    \tilde{P}(\tilde{\nabla}_{\dot{g}} \dot{g}) = -\tilde{P}\left(\widetilde{\text{grad }} \tilde{V}(g(t))\right).
    \end{equation*}
    Using the properties of $\tilde{\nabla}$, one deduces that
    \begin{equation*}
    \tilde{P}(T_{e}L_{g}(\dot{\xi} + \tilde{\nabla}^{\mathfrak{g}}_{\xi}\xi)) = -\tilde{P}\left(\widetilde{\text{grad }} \tilde{V}(g(t))\right).
    \end{equation*}
    Now, since $\tilde{P}\circ (T_{e}L_{g})=(T_{e}L_{g})\circ \mathfrak{P}$ we also have that
    \begin{equation*}
    (T_{e}L_{g}\circ \mathfrak{P})(\dot{\xi} + \tilde{\nabla}^{\mathfrak{g}}_{\xi}\xi) = -\tilde{P}\left(\widetilde{\text{grad }} \tilde{V}(g(t))\right).
    \end{equation*}
    Equivalently, applying $T_{g}L_{g^{-1}}$ to both sides and using again the expression relating the projections
    \begin{equation*}
        \mathfrak{P}(\dot{\xi} + \tilde{\nabla}^{\mathfrak{g}}_{\xi}\xi) = -\mathfrak{P} \circ T_{g}L_{g^{-1}}\left(\widetilde{\text{grad }} \tilde{V}(g(t))\right).
    \end{equation*}
    Finally, since $\dot{g}\in \tilde{\mathcal{D}}$ we have that $\xi\in\mathfrak{d}$ and $\dot{\xi}\in \mathfrak{d}$. Therefore, using the fact that $\nabla^{\frakd}_\xi \eta = \mathfrak{P}(\nabla_{\xi}^\g\eta)$, for all $\xi, \eta \in \frakd$ we have that
    \begin{equation*}
        \dot{\xi} + \tilde{\nabla}^{\mathfrak{d}}_{\xi}\xi = -\mathfrak{P} \circ T_{g}L_{g^{-1}}\left(\widetilde{\text{grad }} \tilde{V}(g(t))\right).
    \end{equation*}
    Reversing all the arguments, we conclude that (2) also implies (1).
    
    Suppose (1) holds. The curve $g(t)$ is horizontal and by definition $\D=T\pi(\tilde{\mathcal{D}})$ so $g(t)=\tilde{q}(t)$ and $\dot{q}\in \D$. Also, if $g$ is a nonholonomic trajectory, it satisfies \[\tilde{\nabla}^{nh}_{\dot{g}}\dot{g}=-\tilde{P}\left(\widetilde{\text{grad}\tilde{V}}\right).\] It is not difficult to prove a similar result to that from Lemma \ref{nh:trajectory:lift} in the presence of a potential function. Indeed, the previous equation is equivalent to $$\tilde{\nabla}_{\dot{g}}\dot{g}+\widetilde{\text{grad }\tilde{V}}\in \tilde{\mathcal{D}}^{\bot}$$
    and $\dot{g}\in\tilde{\mathcal{D}}$. The fact that the gradient vector field $\widetilde{\text{grad } \tilde{V}}$ projects onto $\text{grad } V$ and from Proposition \ref{pi:related:connections}, we have that
    $$\nabla_{\dot{q}}\dot{q}+\text{grad }V(q(t))\in \D^{\bot} \text{ and } \dot{q}\in \D$$
    which implies that
    \[\nabla^{nh}_{\dot{q}}\dot{q}=-\mathcal{P}\left(\text{grad}V(q(t))\right).\]

    Conversely, if (3) holds then $g(t)$ is not only horizontal but also $\dot{g}(t)\in \tilde{\D}$. Let $h(t)$ be a curve on $G$ satisfying the equation
    \[\tilde{\nabla}^{nh}_{\dot{h}}\dot{h}=-\tilde{P}\left(\widetilde{\text{grad}\tilde{V}}\right),\]
    with intial condition $h(0)= g(0)$ and $\dot{h}(0)=\dot{g}(0)\in \tilde{\D}$. $h(t)$ is a curve whose velocity lies in $\tilde{\D}$ for all $t$ and projects to the unique solution $p(t)$ of the equation
    \[\nabla^{nh}_{\dot{p}}\dot{p}=-\mathcal{P}\left(\text{grad}V(p(t))\right),\]
    with initial position $p(0)=q(0)$ and initial velocity $\dot{p}(0)=\dot{q}(0)$. Hence, $p(t)=q(t)$, and by uniqueness of the horizontal lift $h(t)=g(t)$.
    \end{proof}

\section{Virtual nonholomic constraints on Riemannian homogeneous spaces}\label{sec4}



\subsection{Virtual constraints}

Next, we briefly recall  the concept of virtual  constraints on a $n$-dimensional manifold $Q$. Suppose that we have a mechanical controlled system, where the control force $F:TQ\times U \rightarrow T^{*}Q$ is of the form
\begin{equation}\label{force:control}
    F(q,\dot{q},u) = \sum_{a=1}^{m} u^{a}\alpha^{a}(q)
\end{equation}
where $\alpha^{a}$ is a one-form on $Q$  with $m<n$, $U\subset\mathbb{R}^{m}$ the set of controls and $u^a\in\mathbb{R}$ with $1\leq a\leq m$ the control inputs. Then, the Riemannian form of the equations of motion reads
\begin{equation}\label{mechanical:control:system}
    \nabla_{\dot{q}(t)} \dot{q}(t) = \text{grad } V(q(t)) + u^{a}(t)Y_{a}(q(t)),
\end{equation}
with $Y_{a}=\sharp(\alpha^{a})$ the corresponding force vector fields. The distribution $\mathcal{F}\subseteq TQ$ generated by the vector fields  $Y_{a}$ is called the \textit{input distribution} associated with the mechanical control system \eqref{mechanical:control:system}. 

\vspace{.2cm}

\begin{definition}
A \textit{virtual constraint} associated with the mechanical control system \eqref{mechanical:control:system} is a controlled invariant distribution $\mathcal{D}\subseteq TQ$ for that system, that is, 
there exists a control function $\hat{u}:\mathcal{D}\rightarrow \mathbb{R}^{m}$ such that the solution of the closed-loop system satisfies $\phi_{t}(\mathcal{D})\subseteq \mathcal{D}$, where $\phi_{t}:TQ\rightarrow TQ$ denotes its flow.
\end{definition}

Provided that $\mathcal{F}$ and $\mathcal{D}$ are complementary distributions, there is a unique control law for the system \eqref{mechanical:control:system} making $\mathcal{D}$ a virtual constraint, see \cite{Consol:Costal:Maggiore}, \cite{Consol:Constal} for the holonomic case and \cite{Simoes:linear:nonholonomic}, \cite{stratoglou2023bvirtual}, \cite{stratoglou2023geometry}, for the nonholonomic one. 

\subsection{Virtual nonholonomic constraints on Riemannian homogeneous spaces}

Suppose that $H$ is a homogenous manifold, acted by a Lie group $G$, $\pi:G \rightarrow H$ denotes the associated projection and consider a mechanical control system of the type \eqref{mechanical:control:system} on the homogeneous manifold $H$. Suppose that the force vector fields $Y_{a}$ are $G$-invariant. In particular, the input distribution $\mathcal{F}$ is $G$-invariant and $\mathcal{F}_{q} = (T_{\pi(e)}\Phi_{g}) ( \mathcal{F}_{\pi(e)} )$, for $q\in H$ and $g\in G$ such that $q=\Phi_{g}(\pi(e))$. In addition, suppose that $\mathcal{D}$ is also a $G$-invariant distribution on $H$ so that $\mathcal{D}_{q}=(T_{\pi(e)}\Phi_{g})  (\mathcal{D}_{\pi(e)})$, where $q\in H$.


Using the identification between $\h$ and $T_{\pi(e)}H$ described in the previous section, there exists a subspace $\mathfrak{d}$ of $\mathfrak{h}$ such that $T_{e}\pi (\mathfrak{d}) = \mathcal{D}_{\pi(e)}$. Likewise, there exist a subspace $\mathfrak{f}\subseteq \h$ such that $T_{e}\pi (\mathfrak{f}) = \mathcal{F}_{\pi(e)}$. These identifications are particularly important for reproducing the trajectories of the Lie algebra on the homogeneous space and vice-versa. 

{On the Lie algebra $\g$, consider the controlled mechanical system of the form
\begin{equation}\label{control:system}
    \dot{g} = T_eL_g\xi, \quad \dot{\xi} + \tilde{\nabla}_{\xi}^\g \xi + T_g L_{g^{-1}}\left(\widetilde{\text{grad}} \ \tilde{V} (g(t)) \right) = \tilde{u}^{a}f_{a},
\end{equation}
where the vectors $f_{a}\in \h$ are defined by $T_{e}\pi(f_{a})=Y_{a}(\pi(e))$ and span the control input subspace
$\mathfrak{f} = \text{span} \{f_{1}, \dots, f_{m}\}.$ It is not difficult to prove that this system evolves inside the horizontal bundle, provided that $\xi$ starts in $\h$. However, we are interested in something else. We would like to know if there exists a control law, that is a function $\tilde{u}:\frakd \to U$, forcing the trajectories to remain in $\frakd$.}

\vspace{.1cm}

\begin{definition}
  The above subspace $\mathfrak{f}$ of the horizontal space $\h \subseteq \g$  is called the  \textit{control input subspace} associated with the mechanical control system \eqref{control:system}. 
\end{definition}

\vspace{.1cm}

\begin{definition}
   A virtual nonholonomic constraint associated with the mechanical system of type \eqref{control:system} is a controlled invariant subspace $\mathfrak{d}$ of $\h$, that is, there exists a control law  making the subspace $\mathfrak{d}$ invariant under the flow of the closed-loop system, i.e. $\xi(0)\in\mathfrak{d}$ and $\xi(t)\in\mathfrak{d}, \; \forall t\geq 0$.
\end{definition}

With this definition we can state the main result of this section: the existence of a control law making $\frakd$ a virtual nonholonomic constraint.

\vspace{.1cm}

\begin{theorem}\label{maintheorem}
    Suppose $\h = \mathfrak{f}\oplus \mathfrak{d} $. Then there exists a unique control law $\tilde{u}^{*}$ making $\mathfrak{d}$ a virtual nonholonomic constraint for the controlled mechanical system \eqref{control:system}.
\end{theorem}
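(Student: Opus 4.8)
The plan is to reduce the invariance requirement to a pointwise linear problem on the horizontal space and to solve it using the transversality hypothesis. First I would exploit the splitting $\h = \mathfrak{f}\oplus \frakd$ to introduce the two (not necessarily orthogonal) complementary projectors $P_{\mathfrak{f}}:\h\to\mathfrak{f}$ and $P_{\frakd}:\h\to\frakd$ associated with this decomposition. Before applying them, I would record that the entire right-hand side of \eqref{control:system} takes values in $\h$ whenever $\xi\in\h$: the control term $\tilde{u}^{a}f_{a}$ lies in $\mathfrak{f}\subseteq\h$ by construction; the gradient term $T_g L_{g^{-1}}\big(\widetilde{\text{grad}}\,\tilde{V}(g)\big)$ is horizontal by Lemma \ref{grad:horizontal} together with the left-invariance of $HG$; and the drift $\tilde{\nabla}_{\xi}^{\g}\xi$ is horizontal by the horizontal-geodesic property in Theorem \ref{thm:LP_geo}, since for horizontal $\xi$ the geodesic generator $-\tilde{\nabla}_{\xi}^{\g}\xi$ must be tangent to $\h$. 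This guarantees that $P_{\mathfrak{f}}$ and $P_{\frakd}$ can legitimately be applied to $\dot{\xi}$.

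Next I would translate the invariance of $\frakd$ into a differential condition. Since $\frakd$ is a linear subspace, its tangent space at every point is $\frakd$ itself, so the flow of the closed-loop system preserves $\frakd$ if and only if $\dot{\xi}\in\frakd$ whenever $\xi\in\frakd$, that is, if and only if $P_{\mathfrak{f}}(\dot{\xi})=0$ along such trajectories. Applying $P_{\mathfrak{f}}$ to the second equation of \eqref{control:system} and using $P_{\mathfrak{f}}(\tilde{u}^{a}f_{a})=\tilde{u}^{a}f_{a}$, the invariance condition becomes
\[ \tilde{u}^{a}f_{a} = P_{\mathfrak{f}}\!\left(\tilde{\nabla}_{\xi}^{\g}\xi + T_g L_{g^{-1}}\big(\widetilde{\text{grad}}\,\tilde{V}(g)\big)\right), \qquad \xi\in\frakd. \]

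Finally I would solve this for the control. Because $\{f_{1},\dots,f_{m}\}$ is a basis of $\mathfrak{f}$, the linear map $\mathbb{R}^{m}\to\mathfrak{f}$, $u\mapsto u^{a}f_{a}$, is an isomorphism; hence the displayed equation determines the coefficients $\tilde{u}^{*,a}(\xi,g)$ uniquely, yielding the claimed unique control law $\tilde{u}^{*}$. Smoothness of $P_{\mathfrak{f}}$, of $\tilde{\nabla}^{\g}$ and of the gradient term makes $\tilde{u}^{*}$ smooth in its arguments, so the standard existence and uniqueness theory for ODEs produces a well-defined closed-loop flow leaving $\frakd$ invariant. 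The step I expect to be the main obstacle is the preliminary one: verifying that the drift $\tilde{\nabla}_{\xi}^{\g}\xi$ remains horizontal, so that the $\mathfrak{f}$-versus-$\frakd$ splitting of $\h$ is the only decomposition in play and no vertical component can leak into $\dot{\xi}$; once that is settled, what remains is the transversality-driven invertibility argument already familiar from the Lie-group case \cite{stratoglou2023virtual}.
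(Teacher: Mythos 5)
Your argument is correct and is essentially the paper's own proof in dual form: where you impose $P_{\mathfrak{f}}(\dot{\xi})=0$ using the projector associated with the splitting $\h=\mathfrak{f}\oplus\frakd$, the paper differentiates the annihilator conditions $\mu^{a}(\xi)=0$ and inverts the matrix $\mu^{a}(f_{b})$, which is the same linear-algebraic step phrased with covectors instead of projectors. Your preliminary verification that the drift $\tilde{\nabla}^{\g}_{\xi}\xi$ stays in $\h$ (via the horizontal-geodesic property) justifies a fact the paper only asserts, and is a welcome addition.
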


\begin{proof}
    Let $\dim \mathfrak{d} = d$ and $\dim \mathfrak{f}=m=k-d$. Consider the covectors $\mu^{1}\dots, \mu_{m} \in \h^{*}$ spanning the annihilator subspace of $\mathfrak{d}$. $\xi(t)$ is a curve on $\h$ satisfying $\xi(t)\in \mathfrak{d}$ for all time if and only if it satifies $\mu^{a}(\xi(t))= 0$ for all $a=1,\dots, m$. Differentiating this equation and supposing that $\xi(t)$ is a solution of the closed loop system \eqref{control:system} for an appropriate choice of control law $\tilde{u}$, we have that
    $$-\mu^{a}\left( \tilde{\nabla}_{\xi}^\g \xi + (T_{g} L_{g^{-1}})\left(\widetilde{\text{grad}} \ \tilde{V} (g(t)) \right)\right) + \tilde{u}^{b}\mu^{a}(f_{b}) = 0.$$
    Since $\tilde{\nabla}_{\xi}^\g \xi + (T_{g} L_{g^{-1}})\left(\widetilde{\text{grad}} \ \tilde{V} (g(t)) \right) \in \h$ and $\h = \mathfrak{f}\oplus \mathfrak{d} $, there is a unique way to decompose this vector as the sum
    $$\tilde{\nabla}_{\xi}^\g \xi + (T_{g} L_{g^{-1}})\left(\widetilde{\text{grad}} \ \tilde{V} (g(t)) \right) = \eta(t) + \tau^{b}(t)f_{b},$$
    with $\eta(t) \in \mathfrak{d}$. In addition, note that the coefficients $\tau^{b}$ may be regarded as functions on $\h$. In fact, its definition is associated with the projection to $\mathfrak{f}$ together with the choice of $\{f_{b}\}$ as a basis for $\mathfrak{f}$. Therefore, $\mu^{a}(\dot{\xi}(t))=0$ if and only if
    $$(\tau^{b}-\tilde{u}^{b}) \mu^{a}(f_{b})=0.$$
    Since $\mu^{a}(f_{b})$ is an invertible matrix, we conclude that $\tau^{b}=\tilde{u}^{b}$ proving existence and uniqueness of a control law making $\mathfrak{d}$ a virtual constraint.
\end{proof}

\begin{remark}
    The transversality assumption appearing in the previous theorem is related to another appearing in the literature of virtual holonomic constraints (see \cite{Consol:Constal}, \cite{Consol:Costal:Maggiore}). In particular, the assumption of (vector) relative degree $\{1,\ldots,1\}$ appearing in the literature of zero dynamics manifolds (see \cite{isidori1985nonlinear}) concerning control systems evolving in Euclidean spaces. If $\mathcal{F}_{g}=T_{e}L_{g}(\mathfrak{f})$ is the left invariant distribution induced by $\mathfrak{f}$, $\tilde{\mathcal{D}}_{g}=T_{e}L_{g}(\mathfrak{d})$ is the left invariant distribution induced by $\mathfrak{d}$, $\phi:TQ\to \R^{m}$ a map annihilating on $\tilde{\mathcal{D}}$ and $Y^{a}\in \mathcal{F}$ are vectors spanning $\mathcal{F}$, then the relative degree of $\phi$ is $\{1,\ldots,1\}$ if $\langle d\phi(v_{q}), (Y^{a})_{v_{q}}^{V} \rangle \neq 0$ for all $a$, which holds by virtue of the fact that $Y$ is not contained in $\tilde{\mathcal{D}}$.
\end{remark}

By construction, if $(g(t), \xi(t))$ is a trajectory of the controlled mechanical system \eqref{control:system}, then $g(t)$ is a horizontal curve. We will prove next that $g$ is in fact the horizontal lift of the curve $q(t)=\pi(g(t))$ and that $q$ is the trajectory of a controlled mechanical system of the form
$$\nabla_{\dot{q}(t)} \dot{q}(t) = - \text{grad} \ V (q(t)) + u^{a}(t)Y_{a}(q(t)).$$

In addition, if $\xi(t) \in \mathfrak{d}$ for all $t$, then $\dot{q}(t)\in \mathcal{D}_{q(t)}$ for all $t$. Then, by uniqueness of the control law making the distribution $\mathcal{D}$ a virtual nonholonomic constraint, the control law $\tilde{u}^{*}\in U$ given in Theorem \ref{maintheorem} must also be the unique control law making $\mathcal{D}$ control invariant. We summarize these facts in the next result.

\vspace{.2cm}

{\begin{theorem}\label{homogeneous:virtual:constraint:thm}
    Let $\tilde{u}^{*}:\frakd\to U$ be the unique control law given by Theorem \ref{maintheorem} and $(g,\xi):[a,b]\to G\times\g$ the solution of the associated closed-loop system
    \begin{equation}\label{virtual:closed:loop:system}
        \begin{split}
            \dot{g} & = (T_{e} L_{g}) (\xi) \\
            \dot{\xi} & + \tilde{\nabla}_{\xi}^\g \xi + (T_{g} L_{g^{-1}})\left(\widetilde{\text{grad}} \ \tilde{V} (g(t)) \right) = \tilde{u}^{* a}(\xi)f_{a},
        \end{split}
\end{equation}
with $\xi(0)\in \frakd$. If $q(t)=\pi(g(t))$ then $\dot{q}(t)\in \D_{q(t)}$, $g(t)=\tilde{q}(t)$ and $q$ satisfies
$$\nabla_{\dot{q}(t)} \dot{q}(t) = - \text{grad} \ V (q(t)) + u^{* a}(t)Y_{a}(q(t)).$$
with $u^{* a}(t):= u^{* a}(q(t),\dot{q}(t))=\tilde{u}^{* a}(\xi(t))$.
\end{theorem}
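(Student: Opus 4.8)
The plan is to leverage the invariance already secured by Theorem \ref{maintheorem} and then transport the closed-loop equation from $\g$ down to $H$ via the correspondence between covariant derivatives encoded in Lemma \ref{lemma2} and Proposition \ref{pi:related:connections}. First I would note that since $\xi(0)\in\frakd$ and $\tilde{u}^{*}$ is the control furnished by Theorem \ref{maintheorem}, we have $\xi(t)\in\frakd\subseteq\h$ for all $t$. Hence $\dot{g}=(T_{e}L_{g})(\xi)\in(T_{e}L_{g})(\h)=\text{Hor}_{g}$, so $g$ is a horizontal curve, and by uniqueness of the horizontal lift (the discussion preceding Theorem \ref{thm:LP_geo}) this forces $g=\tilde{q}$, giving statement (2). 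Since moreover $\dot{g}\in(T_{e}L_{g})(\frakd)=\tilde{\mathcal{D}}_{g}$, projecting by $T\pi$ yields $\dot{q}=T\pi(\dot{g})\in T\pi(\tilde{\mathcal{D}})=\mathcal{D}$, which is statement (1).

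For statement (3) I would rewrite the second line of \eqref{virtual:closed:loop:system} as $\dot{\xi}+\tilde{\nabla}_{\xi}^{\g}\xi=-(T_{g}L_{g^{-1}})(\widetilde{\text{grad}}\,\tilde{V}(g))+\tilde{u}^{*a}(\xi)f_{a}$ and apply $T_{e}L_{g}$ to both sides. By Lemma \ref{lemma2} applied with $X=\dot{g}$ (so $\eta=\xi$), the left-hand side becomes $\tilde{\nabla}_{\dot{g}}\dot{g}$, giving
$$\tilde{\nabla}_{\dot{g}}\dot{g}=-\widetilde{\text{grad}}\,\tilde{V}(g)+\tilde{u}^{*a}(\xi)\,(T_{e}L_{g})(f_{a}).$$
The crucial identification is that $(T_{e}L_{g})(f_{a})$ is the horizontal lift $\tilde{Y}_{a}(g)$: it is horizontal because $f_{a}\in\h$ and $HG$ is left-invariant, and using $\pi\circ L_{g}=\Phi_{g}\circ\pi$ together with the $G$-invariance of $Y_{a}$ one computes $T_{g}\pi\big((T_{e}L_{g})(f_{a})\big)=(T_{\pi(e)}\Phi_{g})(Y_{a}(\pi(e)))=Y_{a}(q)$. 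I would then project the displayed equation by $T\pi$: the left side gives $\nabla_{\dot{q}}\dot{q}$ by Proposition \ref{pi:related:connections}(1) applied to the velocity field along the curve (exactly as in the proofs of Lemma \ref{nh:trajectory:lift} and Theorem \ref{thm:LP_mecg}); the potential term gives $\text{grad}\,V(q)$ because $\widetilde{\text{grad}}\,\tilde{V}$ is the horizontal lift of $\text{grad}\,V$ by Lemma \ref{grad:horizontal}; and the input term gives $\tilde{u}^{*a}(\xi)Y_{a}(q)$. Setting $u^{*a}(t):=\tilde{u}^{*a}(\xi(t))$ produces precisely $\nabla_{\dot{q}}\dot{q}=-\text{grad}\,V(q)+u^{*a}(t)Y_{a}(q)$.

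It remains to justify writing $u^{*a}(t)=u^{*a}(q(t),\dot{q}(t))$ as a genuine feedback. Here I would observe that along the horizontal lift $g=\tilde{q}$ the curve $\xi=(T_{g}L_{g^{-1}})(\dot{g})$ is recovered from $\dot{q}$ by lifting $\dot{q}$ horizontally to $\text{Hor}_{g}$ and left-translating to the identity, so $\xi$, and hence $\tilde{u}^{*a}(\xi)$, is a function of $(q,\dot{q})$ along the trajectory. Since $\dot{q}(t)\in\mathcal{D}_{q(t)}$ for all $t$ by statement (1), this feedback renders $\mathcal{D}$ invariant for the downstairs control system; by the uniqueness of such a control law recalled in the paragraph preceding the theorem, it coincides with the unique control making $\mathcal{D}$ a virtual nonholonomic constraint.

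I expect the main obstacle to be the two geometric identifications that underpin the projection step: upgrading Proposition \ref{pi:related:connections}, stated for vector fields, to the velocity field $\dot{q}$ along the curve, and pinning down $(T_{e}L_{g})(f_{a})=\tilde{Y}_{a}(g)$. Both rest squarely on the intertwining relation $\pi\circ L_{g}=\Phi_{g}\circ\pi$ and on the $G$-invariance of the force fields $Y_{a}$, so these hypotheses must be invoked explicitly rather than left implicit.
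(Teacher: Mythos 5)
Your proposal is correct and follows essentially the same route as the paper: establish $\xi(t)\in\frakd$ by Theorem \ref{maintheorem}, conclude $g$ is horizontal so $g=\tilde{q}$ and $\dot q\in\D$, lift the Lie-algebra equation back to $\tilde\nabla_{\dot g}\dot g$ via Lemma \ref{lemma2}, identify $(T_eL_g)(f_a)$ with the horizontal lift of $Y_a$ using $\pi\circ L_g=\Phi_g\circ\pi$ and $G$-invariance, and project everything by $T\pi$ using Proposition \ref{pi:related:connections} and Lemma \ref{grad:horizontal}. Your closing remarks on recovering $\xi$ from $(q,\dot q)$ and on uniqueness of the downstairs feedback match the discussion the paper places immediately before the theorem.
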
}

\begin{proof}
    By construction of the $\g$-connection, if $(g,\xi)$ satisfies equation \eqref{control:system}, then $g$ satisfies
    $$\tilde{\nabla}_{\dot{g}}\dot{g} + \widetilde{\text{grad}} \ \tilde{V} (g(t)) = \tilde{u}^{* a}((T_{g} L_{g^{-1}})(\dot{g})) (f^{L})_{a},$$
    where $f^{L}_{a}=T_{e}L_{g}(f_{a})$. Moreover, since $\xi(t)\in \frakd$ in $[a,b]$ we have that $\dot{g}(t)\in \tilde{\mathcal{D}}$. In particular, $g$ is horizontal and, hence, $g(t)=\tilde{q}(t)$.

    {By Proposition \ref{pi:related:connections}, $T\pi$ projects $\tilde{\nabla}_{\dot{\tilde{q}}}\dot{\tilde{q}}$ to $\nabla_{\dot{q}} \dot{q}$. In addition, from Lemma \ref{grad:horizontal}, $\widetilde{\text{grad}} \ \tilde{V} (\tilde{q}(t))$ is the horizontal vector field projected onto $\text{grad }V(q(t))$ by $T\pi$. Finally, $$T_{g}\pi (f^{L}_{a}(g))= T_{g}\pi (T_{e}L_{g}(f_{a})) = T_{\pi(e)}\Phi_{g} (T_{e} \pi (f_{a})),$$
    where the last equality comes from the relation $\pi \circ L_{g} = \Phi_{g} \circ \pi$. Now, using that $Y_{a}(e) = T_{e}\pi(f_{a})$ and the $G$-invariance of $Y_{a}$, implying that $Y_{a}(g)=T_{\pi(e)}\Phi_{g}(Y_{a}(e))$ we deduce that $T_{g}\pi (f^{L}_{a}(g))=Y_{a}$.
    
    If we define $u^{* a}(q(t),\dot{q}(t)):=\tilde{u}^{ *a}(T_{\tilde{q}(t)}L_{\tilde{q}(t)^{-1}}\dot{\tilde{q}}(t)) $, we obtain the desired result.}\end{proof}

\subsection{The induced constrained connection}

Let us define the projections $\mathfrak{p}_{\frakd}:\g\to\frakd$ and $\mathfrak{p}_{\mathfrak{f}}:\g\to \mathfrak{f} \oplus \mathfrak{s}$, associated with the direct sum $\h = \frakd \oplus \mathfrak{f}$. Note that, from the first decomposition of the Lie algebra in terms of horizontal and vertical spaces, i.e., $\g = \h \oplus \mathfrak{s}$, we obtain the three-part decomposition of the Lie algebra $\g = \frakd \oplus (\mathfrak{f} \oplus \mathfrak{s})$ to which the above projections are associated.

Now let us define the Lie algebra connection called $\tilde{\nabla}^{\frakd,\mathfrak{f}}$-connection given by
$$\tilde{\nabla}^{\frakd,\mathfrak{f}}_{\xi}\eta = \tilde{\nabla}^{\g}_{\xi} \eta + (\tilde{\nabla}^{\g}_{\xi}\mathfrak{p}_{\mathfrak{f}})(\eta),$$

{\begin{theorem}\label{intrinsic:virtual:constraint:thm}
    The trajectory of the closed-loop system \eqref{virtual:closed:loop:system} is a trajectory of the equations
    \begin{equation}\label{intrinsic:equations}
        \begin{split}
            \dot{g} & = (T_{e} L_{g}) (\xi), \\
            \dot{\xi} & + \tilde{\nabla}_{\xi}^{\frakd,\mathfrak{f}} \xi + \mathfrak{p}_{\frakd}\left((T_{g} L_{g^{-1}})\left(\widetilde{\text{grad}} \ \tilde{V} (g(t)) \right)\right) = 0.
        \end{split}
\end{equation}
\end{theorem}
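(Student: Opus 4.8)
The plan is to show that, once the invariance-preserving feedback of Theorem \ref{maintheorem} is substituted into \eqref{virtual:closed:loop:system}, the resulting evolution equation for $\xi$ is literally the second line of \eqref{intrinsic:equations}. First I would invoke Theorem \ref{maintheorem}: since $\xi(0)\in\frakd$ and $\tilde{u}^{*}$ is the control rendering $\frakd$ invariant, we have $\xi(t)\in\frakd$ for all $t$. Write $W:=\tilde{\nabla}^{\g}_{\xi}\xi+(T_{g}L_{g^{-1}})\bigl(\widetilde{\text{grad}}\ \tilde{V}(g(t))\bigr)$. The key observation, already contained in the proof of Theorem \ref{maintheorem}, is that $W\in\h$: indeed $\tilde{\nabla}^{\g}_{\xi}\xi\in\h$ because the geodesic spray is horizontal by Theorem \ref{thm:LP_geo}, while $(T_{g}L_{g^{-1}})\bigl(\widetilde{\text{grad}}\ \tilde{V}\bigr)\in\h$ by Lemma \ref{grad:horizontal}. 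Consequently, in the splitting $\h=\frakd\oplus\mathfrak{f}$ the unique invariance-preserving control satisfies $\tilde{u}^{*a}(\xi)f_{a}=\mathfrak{p}_{\mathfrak{f}}(W)$, i.e. it cancels precisely the $\mathfrak{f}$-component of $W$.

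Substituting this into the second line of \eqref{virtual:closed:loop:system} gives $\dot{\xi}=-W+\mathfrak{p}_{\mathfrak{f}}(W)$. Because the three-part decomposition $\g=\frakd\oplus(\mathfrak{f}\oplus\mathfrak{s})$ makes $\mathfrak{p}_{\frakd}$ and $\mathfrak{p}_{\mathfrak{f}}$ complementary, $\mathfrak{p}_{\frakd}+\mathfrak{p}_{\mathfrak{f}}=\mathrm{id}_{\g}$, so this collapses to
\[
\dot{\xi}=-\mathfrak{p}_{\frakd}(W)=-\mathfrak{p}_{\frakd}\bigl(\tilde{\nabla}^{\g}_{\xi}\xi\bigr)-\mathfrak{p}_{\frakd}\bigl((T_{g}L_{g^{-1}})(\widetilde{\text{grad}}\ \tilde{V}(g(t)))\bigr).
\]

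The second, and central, step is to identify $\mathfrak{p}_{\frakd}\bigl(\tilde{\nabla}^{\g}_{\xi}\xi\bigr)$ with the intrinsic connection term $\tilde{\nabla}^{\frakd,\mathfrak{f}}_{\xi}\xi$. Expanding the definition of the $\tilde{\nabla}^{\frakd,\mathfrak{f}}$-connection at $\eta=\xi$ yields $\tilde{\nabla}^{\frakd,\mathfrak{f}}_{\xi}\xi=\tilde{\nabla}^{\g}_{\xi}\xi+\tilde{\nabla}^{\g}_{\xi}\bigl(\mathfrak{p}_{\mathfrak{f}}(\xi)\bigr)-\mathfrak{p}_{\mathfrak{f}}\bigl(\tilde{\nabla}^{\g}_{\xi}\xi\bigr)$. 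Here I would use that $\xi\in\frakd$ forces $\mathfrak{p}_{\mathfrak{f}}(\xi)=0$, so the middle term drops, and then apply $\mathfrak{p}_{\frakd}+\mathfrak{p}_{\mathfrak{f}}=\mathrm{id}_{\g}$ to get $\tilde{\nabla}^{\frakd,\mathfrak{f}}_{\xi}\xi=\tilde{\nabla}^{\g}_{\xi}\xi-\mathfrak{p}_{\mathfrak{f}}\bigl(\tilde{\nabla}^{\g}_{\xi}\xi\bigr)=\mathfrak{p}_{\frakd}\bigl(\tilde{\nabla}^{\g}_{\xi}\xi\bigr)$. Inserting this into the displayed equation reproduces exactly the second line of \eqref{intrinsic:equations}, while the first lines agree by construction, completing the argument.

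I expect the main subtlety to lie not in the algebra but in correctly matching the feedback of Theorem \ref{maintheorem} with the projector $\mathfrak{p}_{\mathfrak{f}}$: one must verify that $W$ lands in $\h$, so that its $\mathfrak{f}\oplus\mathfrak{s}$-projection actually falls into $\mathfrak{f}$ (the only directions the control acts in), which is precisely what legitimizes reading $-W+\mathfrak{p}_{\mathfrak{f}}(W)=-\mathfrak{p}_{\frakd}(W)$ as the intrinsic dynamics. The horizontality inputs furnished by Lemma \ref{grad:horizontal} and Theorem \ref{thm:LP_geo} are therefore indispensable and should be stated explicitly in the write-up.
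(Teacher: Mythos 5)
Your proposal is correct and follows essentially the same route as the paper: both arguments hinge on the invariance $\xi(t)\in\frakd$, the identity $\tilde{\nabla}^{\frakd,\mathfrak{f}}_{\xi}\xi=\mathfrak{p}_{\frakd}\bigl(\tilde{\nabla}^{\g}_{\xi}\xi\bigr)$ for $\xi\in\frakd$, and the fact that the control term is annihilated by $\mathfrak{p}_{\frakd}$. The only cosmetic difference is that you identify the feedback explicitly as $\mathfrak{p}_{\mathfrak{f}}(W)$ and substitute, whereas the paper simply applies $\mathfrak{p}_{\frakd}$ to the closed-loop equation; your write-up also usefully fills in the verification the paper leaves as ``not difficult to prove.''
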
}

\begin{proof}
     It is not difficult to prove that if $\xi \in \frakd$, then $\tilde{\nabla}_{\xi}^{\frakd,\mathfrak{f}} \xi = \mathfrak{p}_{\frakd} \left( \tilde{\nabla}^{\g}_{\xi} \xi \right)$. Attending to the fact that $\xi(t)\in \frakd$ along the solutions of the closed loop system, then we have that
     $$\dot{\xi} + \tilde{\nabla}_{\xi}^{\frakd,\mathfrak{f}} \xi = \dot{\xi} + \mathfrak{p}_{\frakd} \left( \tilde{\nabla}^{\g}_{\xi} \xi \right)= \mathfrak{p}_{\frakd} \left( \dot{\xi} + \tilde{\nabla}^{\g}_{\xi} \xi \right)$$
     And using that $\mathfrak{p}_{\frakd}(f_{a}) = 0$, we deduce
     $$\dot{\xi} + \tilde{\nabla}_{\xi}^{\frakd,\mathfrak{f}} \xi = -\mathfrak{p}_{\frakd}\left((T_{g} L_{g^{-1}})\left(\widetilde{\text{grad}} \ \tilde{V} (g(t)) \right)\right)$$
\end{proof}

\begin{remark}
    Consider the affine connection $\nabla^{\D,\mathcal{F}}$ on $H$ associated with the projections $P_{\mathcal{D}}:TH\to\D$ and $P_{\mathcal{F}}:TH\to \mathcal{F}$. Then the trajectories $(g,\xi)$ of \eqref{intrinsic:equations} generate a trajectory $q(t)=\pi(g(t))$ which satisfies
    $$\nabla^{\D,\mathcal{F}}_{\dot{q}(t)} \dot{q}(t) = - P_{\mathcal{D}}(\text{grad} \ V (q(t))).$$
    This is a consequence of Theorems \ref{homogeneous:virtual:constraint:thm} and \ref{intrinsic:virtual:constraint:thm} together with Theorem 2 in \cite{Simoes:linear:nonholonomic}.
\end{remark}


\subsection{Example: A sphere rolling over another sphere}\label{ex1}

\vspace{-.2cm}

Consider a sphere rolling on another sphere. For simplicity we consider the radius of the rolling sphere to be 1 and that of the stationary sphere equal to $\rho>1.$ The rolling sphere is equipped with an orthonormal frame fixed at its center. The motion of the sphere is completely described by its position on the standing sphere given by a normalized vector $r\in\Sp^2$ and by a rotation matrix $R\in\SO(3)$ which gives the orientation of the frame of the rolling sphere related to a fixed frame at the center of the stationary one. Thus, the configuration space is $H=\Sp^2\times\SO(3).$

\vspace{-.2cm}
The Lie group $\SO(3)$ acts transitively on the 2-sphere $\Sp^2$ by left multiplication, hence the latter is a homogeneous space i.e. it can be seen as the quotient of the $\SO(3)$ and a closed subgroup of $\SO(3)$, namely $\Sp^2\simeq\SO(3)/\tilde{K}$, where the subgroup $\tilde{K}$ is isomorphic to $\SO(2)$ expressing its elements $k\in \tilde{K}$ by $k=\begin{pmatrix}
    \cos(\theta) & -\sin(\theta) & 0 \\
    \sin(\theta) & \cos(\theta) & 0 \\
    0 & 0 & 1
\end{pmatrix} \text{ with } \theta\in \mathbb{S}^1$. For the second component of the configuration manifold $H$, that is $\SO(3)$, which is a Lie group, the same occurs trivially with the left-multiplication on itself and the homogeneous structure is $\SO(3)\simeq\SO(3)/I$, where $I$ is the identity matrix. Ultimately, the Lie group $G=\SO(3)\times\SO(3)$ acts transitively on $H$ by the Lie group action $\Phi:G\times H\to H$, with $\Phi_{(S,R)}(r,T)=(Sr,RT).$ For $e_3=[0 \; 0 \; 1]^T$, we define $K=\text{Stab}((e_3,I))$  which is the stabilizer subgroup of the action $\Phi$. The projection map $\pi:G\to H$ is given by $\pi(S,R)=(Se_3,RI)$. We will denote the projection $SO(3)\rightarrow \mathbb{S}^{2}$ also by $\tilde{\pi}$, i.e., $\tilde{\pi}(S)=Se_3, S\in \SO(3).$

\vspace{-.2cm}
Using the hat map, we identify $\so(3)\cong \mathbb{R}^3$. Consider the orthonormal basis $\{\hat{e}_1, \hat{e}_2, \hat{e}_3\}$ of $\so(3)$, where $e_1, e_2, e_3$ is the standard basis on $\mathbb{R}^3$ so, we have $\pi_*(\hat{e}_1,\cdot)=(-e_2,\cdot), \pi_*(\hat{e}_2,\cdot)=(e_1,\cdot)$ and $\pi_*(\hat{e}_3,\cdot)=(0, \cdot)$.

\vspace{-.2cm}

Suppose the Lie group $G=\SO(3)\times\SO(3)$ is equipped with a left-invariant metric given by the inner product on $\g=\so(3)\times\so(3)$, i.e. $\langle (\hat{\Pi}_1,\hat{\Omega}_1), (\hat{\Pi}_2,\hat{\Omega}_2)\rangle_{\g}=\langle\hat{\Pi}_1,\hat{\Pi}_2\rangle_{\so(3)} + \langle\hat{\Omega}_1,\hat{\Omega}_2\rangle_{\so(3)} = \Pi^T_1\Pi_2 + \Omega_1^T\mathbb{J}\Omega_2$ for all $\Pi_1,\Pi_2,\Omega_1,\Omega_2\in \mathbb{R}^3$ and $\mathbb{J}$ the moment of inertia tensor. Using this left-invariant metric of the Lie group $G$ we define an inner product on $T_{e_3}\mathbb{S}^2\times\so(3)$ via the relation $\langle X, Y \rangle_{T_{e_3}\mathbb{S}^2\times\so(3)}:=\langle\pi^{-1}_{*}X,\pi^{-1}_*Y\rangle_{\g}$ $= x\cdot y + \bar{X}^T\mathbb{J} \bar{Y}$ for all $X, Y \in T_{e_3}\mathbb{S}^2\times\so(3)$ where $X=(x,\bar{X})$ and $Y=(y,\bar{Y})$. Following \cite{goodman2024reduction} we have that the first part of $\langle\cdot,\cdot\rangle_{T_{e_3}\mathbb{S}^2\times\so(3)}$ is the standard Euclidean metric with respect to the basis $\{e_1,e_2\}$. Thus, we can extend this inner product to an SO(3)-invariant Riemannian metric on $H=\Sp^2\times\SO(3)$ by left-action given by
$\langle X,Y\rangle_{H}=\langle \bar{R}^{-1}X,\bar{R}^{-1} Y\rangle_{T_{e_3}\mathbb{S}^2\times\so(3)}=\langle\pi^{-1}_{*}(\bar{R}^{-1} X),\pi^{-1}_*(\bar{R}^{-1} Y)\rangle_{\g}= S^{T}x \cdot S^{T}y + (R^{T}\bar{X})^{T}\mathbb{J} R^{T}\bar{Y}=x \cdot y + \bar{X}^{T} (R\mathbb{J}R^{T})\bar{Y}$.
for all $X,Y\in T_q\mathbb{S}^2\times T_R\SO(3),\; \bar{R}=(S,R)\in G$ such that $\pi(S,R)=(q,R),$ where $\bar{R}^{-1}X=(S^{-1},R^{-1})\cdot(x,\bar{X})=(S^{-1}x,R^{-1}\bar{X})$ for $X=(x,\bar{X})$ and $Y=(y,\bar{Y})$.

With the Riemannian homogeneous structure above, we have that $\s=\ker(\pi_*|_{\g})=\text{span}\{(\hat{e}_3,0)\}$ and we define $\mathfrak{h}=\mathfrak{s}^\perp$ such that $\mathfrak{h}=\text{span}\{(\hat{e}_1,0), (\hat{e}_2,0), (0, \hat{e}_1), (0, \hat{e}_2), (0, \hat{e}_3) \}$. For the inner product on $\mathfrak{g}$, the flat map $\flat_{\g}:\g \to \g^{*}$ is given by $\flat_{\g}(\Pi, \Omega)=(\Pi, \mathbb{J}\Omega)$ and its inverse, the sharp map $\sharp_{\g}: \g^*\to \g$, is given by $\sharp_{\g}{(\mu, \nu)}=(\mu,\mathbb{J}^{-1}\nu)$, where $\mu$ and $\nu$ are vectors in $\R^{3}$ identified with the matrices $\bar{\mu}$ and $\bar{\nu}$ in $\g^{*}$ through the dual pairing $\left\langle \bar{\mu}, \hat{\Pi}\right\rangle = \mu^{T}\Pi$ and $\left\langle \bar{\nu}, \hat{\Omega}\right\rangle = \nu^{T}\Omega$.
The adjoint operator $\ad:\g\times\g\to\g$ for $\g=\so(3)\times \so(3)$ is given by  
\begin{align*}
    \ad_{\xi}\eta&=(\ad_{\hat{\Pi}_1}\hat{\Pi}_2,\ad_{\hat{\Omega}_1}\hat{\Omega}_2)=([\hat{\Pi}_1,\hat{\Pi}_2],[\hat{\Omega}_1,\hat{\Omega}_2]) \\
    &=(\widehat{\Pi_1\times\Pi_2},\widehat{\Omega_1\times\Omega_2})
\end{align*}
where $\ad_{\hat{\Pi}_1}\hat{\Pi}_2$ and $\ad_{\hat{\Omega}_1}\hat{\Omega}_2$ are the adjoint operators on $\so(3)$ given by the cross product of vectors on $\R^3$ using the hat map, $\xi=(\hat{\Pi}_1, \hat{\Omega}_{1})$ and $\eta=(\hat{\Pi}_2, \hat{\Omega}_{2})$. The co-adjoint operator is given by $\ad^{*}_{(\hat{\Pi}, \Omega))} (\mu, \nu) = (\mu \times \Pi, \nu \times \Omega)$.

Since $\mathfrak{s}=\text{span}\{(\hat{e}_3,0)\}$, the vertical space of $G$ at $g=(S,R)$ is defined by $\text{Ver}_S\times \{0\}=\text{span}\{(T_{I}L_{S}(\hat{e}_3),0)\}$ and the horizontal space is defined as $\text{Hor}_S\times T_{R}\SO(3)$ where $\text{Hor}_S=\text{span}\{(T_{I}L_{S}(\hat{e}_1),T_{I}L_{S}(\hat{e}_2) )\}$. The horizontal projection can be calculated by $\mathcal{H}(\hat{\Pi},\hat{\Omega})=(\widehat{\Pi\times e_3},\hat{\Omega}).$


The $\mathfrak{g}$-connection is given by
\begin{equation*}
    \tilde{\nabla}^\g_\xi\eta  =\frac{1}{2} \left(\widehat{(\Pi_{1} \times \Pi_{2})}, \widehat{\Omega_1\times\Omega_2} -\widehat{\mathbb{J}^{-1}(\mathbb{J}\Omega_1\times\Omega_2 - \mathbb{J}\Omega_2\times\Omega_1)} \right),  
\end{equation*} where $\xi=(\hat{\Pi}_1,\hat{\Omega}_1)$ and $\eta=(\hat{\Pi}_2, \hat{\Omega}_2).$

For a horizontal curve $\bar{R}:[a,b]\to G$ we have from Lemma \ref{lemma2} 
that
\begin{align*}
    \tilde{\nabla}_{\dot{\bar{R}}} \dot{\bar{R}}(t) &= \bar{R}(t)\left(\dot{\xi} + \tilde{\nabla}_{\xi}^\g \xi(t) \right)  \\
    &= (S,R) \left(\dot{\hat{\Pi}}, \dot{\hat{\Omega}} -\left(\widehat{\mathbb{J}^{-1}(\mathbb{J}\Omega\times\Omega)}\right) \right),
\end{align*} where $\xi=\bar{R}^{-1}\dot{\bar{R}}$ and $\bar{R}=(S,R)$. In particular, if $\bar{R}$ is a horizontal geodesic then from the respective equation 
$\xi = (\hat{\Pi}, \hat{\Omega})$ satisfies
\begin{equation*}
         \dot{\Pi} = 0, \quad
         \mathbb{J}\dot{\Omega} = \mathbb{J}\Omega\times\Omega.
\end{equation*}
Note here that the second equation is the usual Euler equation for a rigid body.


Suppose we want to impose the non-slipping condition expressed by the nonholonomic constraints equations 
\begin{equation*}
         \dot{q}\cdot Re_{1} = - e_{2}^{T}\omega, \hbox{ and }
         \dot{q}\cdot Re_{2} = e_{1}^{T}\omega
\end{equation*}
which, in the north pole, can be written as 
\begin{equation*}
        \dot{x} = - e_{2}^{T}\omega, \quad
         \dot{y} = e_{1}^{T}\omega.
\end{equation*}
The constraint in the tangent space of the north pole, $T_{q}H$, can be written as $\text{span}\{(e_{2},\hat{e}_1), (-e_{1},\hat{e}_2), (0,\hat{e}_3)\}\subseteq T_{q}H$ and the same constraints expressed in the Lie algebra $\so(3)$ define the subspace $\mathfrak{d}=\text{span}\{(-\hat{e}_{1},\hat{e}_1), (-\hat{e}_{2},\hat{e}_2), \\ (0,\hat{e}_3)\}\subseteq\mathfrak{h}$. Thus, we define $\mathfrak{f}=\text{span}\{(\hat{e}_{1},\hat{e}_1),(\hat{e}_{2},\hat{e}_2)\}$ and we look for a control law that makes the system 
\begin{equation} \label{closed loop system}
     \dot{\Pi} = \textbf{u}, \quad
        \mathbb{J}\dot{\Omega} = \mathbb{J}\Omega\times\Omega + \textbf{u}
\end{equation}
control invariant, where $\textbf{u}=(u_1,u_2,0)\in \R^3$.

Let us consider $\mathbb{J}$ to be a diagonal matrix with entries 
$J_i, i=1,2,3.$ Differentiating the constraint equations and expressing them in terms of the Lie group, we get
\begin{equation*}
        \dot{\Pi}_2 = - e_{2}^{T}\dot{\Omega}, \quad
        \dot{\Pi}_1 = - e_{1}^{T}\dot{\Omega}.
\end{equation*}
Thus, using the equations (\ref{closed loop system}), we have that the unique control law that makes $\mathfrak{d}$ a virtual nonholonomic constraint is 
\begin{equation*}
         u_1 = \frac{J_3-J_2}{J_1+1}\Omega_2\Omega_3, \quad
        u_2 = \frac{J_1-J_3}{J_2+1}\Omega_1\Omega_3.
\end{equation*}

\subsection{Example: Blade moving on a sphere}\label{ex2}
Consider a blade moving on a sphere. To analyse the system fix a great circle on the sphere, called the equator and a coordinate system fixed on the body $\{e_{1}, e_{2}\}$, attached to the point of contact of the blade. The configuration of the body is described by its position on the sphere $r\in\Sp^2$, and the angle $\vartheta$ defined as the angle between the tangent vector to the equator and the velocity vector of the geodesic passing through $r$ with direction $e_{1}$ at the point at which the two great circles intersect. Hence, the configuration is $H=\Sp^2\times\Sp^1$. 

\vspace{-.2cm}

Regarding the first component of $H$, the analysis in the previous example applies here as well and $\Sp^1$ is a Lie group so $H$ is a homogeneous space. Thus, we have that the Lie group $G=\SO(3)\times\Sp^1$ acts transitively on $H$ by the action $\Psi:G\times H\to H,$ with $\Psi_{(S,\varphi)}(r,\vartheta)=(Sr,\varphi+\vartheta)$. For $e_3=[0 \; 0 \; 1]^T$ we define $K=\text{Stab}((e_3,0))$ which is the stabilizer subgroup of the action $\Phi$. The projection map is $\pi:G\to H$ is given by $\pi(S,\varphi)=(Se_3,\varphi)$. By abuse of notation, we will denote the projection $SO(3)\rightarrow \mathbb{S}^{2}$ also by $\pi$, i.e., $\pi(S)=Se_3, S\in \SO(3).$ The Lie algebra is $\so(3)\times\R$, where the first component is as in Example \ref{ex1}.

\vspace{-.2cm}

Suppose the Lie group $G=\SO(3)\times\Sp^1$ is equipped with a left-invariant metric given by the inner product on $\g=\so(3)\times\R$, i.e. $\langle (\hat{\Pi}_1,\omega_1), (\hat{\Pi}_2,\omega_2)\rangle_{\g}=\langle\hat{\Pi}_1,\hat{\Pi}_2\rangle_{\so(3)} + \langle \omega_1,\omega_2\rangle_{\R} = \Pi^T_1\Pi_2 + \omega_1\omega_2$ for all $\Pi_1,\Pi_2\in \mathbb{R}^3$ and $\omega_1,\omega_2\in\R$. Using this left-invariant metric of the Lie group $G$ we define an inner product on $T_{e_3}\mathbb{S}^2\times\R$ via the relation $\langle X, Y \rangle_{T_{e_3}\mathbb{S}^2\times\R}:=\langle\pi^{-1}_{*}X,\pi^{-1}_*Y\rangle_{\g}=x\cdot y + \omega_1\omega_2$  for all $X, Y \in T_{e_3}\mathbb{S}^2\times\R,$ where $X=(x,\omega_1), Y=(y,\omega_2)$. As previously, we can extend this inner product to a $G$-invariant Riemannian metric on $H=\Sp^2\times\Sp^1$ by left-action given by $\langle X,Y\rangle_{H}=\langle \bar{R}^{-1}X,\bar{R}^{-1}Y\rangle_{T_{e_3}\mathbb{S}^2\times\R}= \langle\pi^{-1}_*(\bar{R}^{-1}X), \pi^{-1}_*(\bar{R}^{-1}Y)\rangle_{\g} = S^Tx\cdot S^Ty + \omega_1\omega_2= x\cdot y + \omega_1\omega_2$ for all $X,Y\in T_q\mathbb{S}^2\times T_\theta\Sp^1, \; \bar{R}=(S,\varphi)\in G$ such that $\pi(S,\varphi)=(q,\theta)$ where $\bar{R}^{-1}X=(S^{-1},-\varphi)(x,\omega_1)=(S^{-1}x,\omega_1)$ and $X=(x,\omega_1), Y=(y,\omega_2)$.

\vspace{-.2cm}

With the Riemannian homogeneous structure above we have that $\s=\ker(\pi_*|_{\g})=\text{span}\{(\hat{e}_3,0)\}$ and $\mathfrak{h}=\mathfrak{s}^\perp=\text{span}\{(\hat{e}_1,0), (\hat{e}_2,0), (0,1) \}$. Associated with the inner product, the flat map $\flat_{\g}:\g \to \g^{*}$ is given by $\flat_{\g}(\Pi, \omega)=(\Pi, \omega)$ and its inverse, the sharp map $\sharp_{\g}: \g^*\to \g$, is given by $\sharp_{\g}{(\mu, \lambda)}=(\mu,\lambda)$  where $\mu$ is a vector in $\R^{3}$ identified with the matrix $\bar{\mu}$ in $\g^{*}$ through the dual pairing $\left\langle \bar{\mu}, \hat{\Pi}\right\rangle = \mu^{T}\Pi$ and $\lambda\in \R$.
The adjoint operator of $\g=\so(3)\times \R$ to itself is given by $\ad:\g\times\g\to\g$, 
\begin{align*}
    \ad_{\xi}\eta&=(\ad_{\hat{\Pi}_1}\hat{\Pi}_2,\ad_{\omega_1}\omega_2)=([\hat{\Pi}_1,\hat{\Pi}_2],0) =(\widehat{\Pi_1\times\Pi_2},0)
\end{align*}
where $\ad_{\hat{\Pi}_1}\hat{\Pi}_2$ is the adjoint operator on $\so(3)$, $\xi=(\hat{\Pi}_1, \omega_{1})$ and $\eta=(\hat{\Pi}_2, \omega_{2})$. The co-adjoint operator is given by $\ad^{*}_{(\hat{\Pi}, \omega)} (\mu, \nu) = (\mu \times \Pi, 0)$.

\vspace{-.2cm}

Since $\mathfrak{s}=\text{span}\{(\hat{e}_3,0)\}$ the vertical space of $G$ is defined by $\text{Ver}_{S}\times \{0\}=\text{span}\{(T_{I}L_{S}(\hat{e}_3),0)\}$ and the horizontal space is defined as $\text{Hor}_{S}\times\Sp^1$ where $\text{Hor}_{S}=\text{span}\{T_{I}L_{S}(\hat{e}_1),T_{I}L_{S}(\hat{e}_2)\}$ for $S\in \SO(3)$. The horizontal projection in given by $\mathcal{H}(\hat{\Pi},\omega)=(\widehat{\Pi\times e_3},\omega).$

\vspace{-.2cm}

\textit{The $\mathfrak{g}$-connection} is given by
\begin{align*}
    \tilde{\nabla}^\g_\xi\eta&=\frac{1}{2} (\widehat{\Pi_1\times\Pi_2},0),  
\end{align*} where $\xi=(\hat{\Pi}_1,\omega_1)$ and $\eta=(\hat{\Pi}_2, \omega_2).$

\vspace{-.2cm}
For a horizontal curve $\bar{R}:[a,b]\to G$ we have from Lemma \ref{lemma2} 
that
\[\tilde{\nabla}_{\dot{\bar{R}}} \dot{\bar{R}}(t) = \bar{R}(t)\left(\dot{\xi} + \tilde{\nabla}_{\xi}^\g \xi(t) \right)= (S,R) \left(\dot{\hat{\Pi}}, \dot{\hat{\Omega}}\right),\]

\vspace{-0.8cm} where $\xi=\bar{R}^{-1}\dot{\bar{R}}$ and $\bar{R}=(S,\vartheta)$. In particular, if $\bar{R}$ is a horizontal geodesic then from the respective equation 
$\xi = (\hat{\Pi}, \omega)$ satisfies
\begin{equation*}
 \dot{\Pi} = 0,\quad
        \dot{\omega} =0.
\end{equation*}

\vspace{-.2cm}

The above equations in the homogeneous space take the form \begin{equation*}
 R\dot{(\Pi}\times e_3) = 0, \quad
 \dot{\omega} =0.
\end{equation*}

\vspace{-.2cm}

For simplicity consider that the equator passes from the north pole then the constraint on the tangent plane at the north pole is given by the knife edge constraint (see \cite{bloch2003nonholonomic} Section $1.6$) $\dot{x}\sin{\vartheta}=\dot{y}\cos{\vartheta}$.
This equation defines the vector space $\D_{e_{3}} = \text{span}\{X=\cos\vartheta e_1 + \sin\vartheta e_2 \}\times\R\subset T_{e_3}H$ and the same constraints expressed in the Lie algebra $\g$ define the distribution $\mathfrak{d}=\text{span}\{(-\sin\vartheta\hat{e}_{1} +\cos\vartheta\hat{e}_{2},0),(0,1)\}\subseteq\mathfrak{h}$. Thus, we define $\mathfrak{f}=\text{span}\{(\cos\vartheta\hat{e}_{1} + \sin\vartheta\hat{e}_2,0)\}$ and we look for a control law that makes the system 
\begin{equation} \label{closed loop system - knife}
   \dot{\Pi}_1 = u\cos\vartheta,\quad \dot{\Pi}_2 = u\sin\vartheta,\quad\dot{\Pi}_3 = 0,\quad \dot{\omega} = 0
\end{equation}
control invariant.

\vspace{-.2cm}

 Differentiating the constraint equations and expressing them in terms of the Lie group, we get
\begin{equation*}
    \dot{\Pi}_2\sin\vartheta + \dot{\Pi}_2\omega\cos\vartheta +\dot{\Pi}_1\cos\vartheta - \dot{\Pi}_1\omega\sin\vartheta=0
\end{equation*}
Thus, using the equations (\ref{closed loop system - knife}) we have that the unique control law making $\mathfrak{d}$ a virtual nonholonomic constraint is 
\begin{equation*}
    u=\omega\left(\Pi_1\sin\vartheta - \Pi_2\cos\vartheta\right).
\end{equation*}

\section{Conclusions}

We have studied virtual nonholonomic constraints for mechanical control systems evolving on Riemannian homogeneous spaces by examining the dynamics on the Lie algebra of the symmetry group. Taking advantage of the linear structure, we have shown the existence and uniqueness of a control law allowing one to define a virtual nonholonomic constraint and we have characterized the trajectories of the closed-loop system as solutions of a mechanical system associated with an induced constrained connection. 

\vspace{-2mm}
By transporting the control system to a linear space, our methodology could help in designing new control laws to enforce a virtual nonholonomic constraint when the configuration manifold is difficult to tackle directly without the use of symmetries. In addition, the qualitative properties of the closed-loop system are in general easier to investigate in a linear space than in an arbitrary nonlinear configuration manifold. In particular, investigating the stabilization of the virtual constraints, which is part of our future work.




\bibliography{autosam}     

@book{holm2009geometric,
  title={Geometric mechanics and symmetry: from finite to infinite dimensions},
  author={Holm, Darryl D and Schmah, Tanya and Stoica, Cristina},
  volume={12},
  year={2009},
  publisher={Oxford University Press}
}

@article{o1967submersions,
  title={Submersions and geodesics},
  author={O’Neill, Barrett},
  year={1967},
  journal={Duke Mathematical Journal},
  volume={34},
  number={2},
  pages={363-373},
  doi={10.1215/S0012-7094-67-03440-0}
}

@book{o1983semi,
  title={Semi-Riemannian Geometry With Applications to Relativity},
  author={O'Neill, B.},
  isbn={9780080570570},
  series={ISSN},
  year={1983},
  publisher={Elsevier Science}
}

@incollection{bloch2003nonholonomic,
  title={Nonholonomic mechanics},
  author={Bloch, Anthony M},
  booktitle={Nonholonomic mechanics and control},
  pages={},
  year={2003},
  publisher={Springer}
}

@book{ne_mark2004dynamics,
  title={Dynamics of nonholonomic systems},
  author={Neimark, Juru Isaakovich and Fufaev, Nikola Alekseevich},
  volume={33},
  year={2004},
  publisher={American Mathematical Soc.}
}

@article{bloch2017optimal,
  title={Optimal control problems with symmetry breaking cost functions},
  author={Bloch, Anthony M and Colombo, Leonardo J and Gupta, Rohit and Ohsawa, Tomoki},
  journal={SIAM Journal on Applied Algebra and Geometry},
  volume={1},
  number={1},
  pages={626--646},
  year={2017},
  publisher={SIAM}
}

@article{colombo2023lie,
  title={Lie-Poisson reduction for optimal control of left-invariant control systems with subgroup symmetry},
  author={Colombo, Leonardo and Stratoglou, Efstratios},
  journal={Reports on Mathematical Physics},
  volume={91},
  number={1},
  pages={131--141},
  year={2023},
  publisher={Elsevier}
}

@article{stratoglou2023virtual,
  title={Virtual Constraints on Lie groups},
  author={Stratoglou, E and Anahory Simoes, A and Bloch, A and Colombo, L},
  journal={arXiv preprint arXiv:2312.17531},
  year={2023}
}

@article{Simoes:linear:nonholonomic,
title = {Virtual nonholonomic constraints: A geometric approach},
journal = {Automatica},
volume = {155},
pages = {111166},
year = {2023},
issn = {0005-1098},
author = {Alexandre Anahory Simoes and Efstratios Stratoglou and Anthony Bloch and Leonardo J. Colombo}
}

@article{goodman2024b-reduction,
  title={Reduction by Symmetry and Optimal Control with Broken Symmetries on Riemannian Manifolds},
  author={Goodman, Jacob R and Colombo, Leonardo J},
  journal={arXiv preprint arXiv:2401.01129},
  year={2024}
}

@article{goodman2024reduction,
  title={Reduction by symmetry in obstacle avoidance problems on riemannian manifolds},
  author={Goodman, Jacob R and Colombo, Leonardo J},
  journal={SIAM Journal on Applied Algebra and Geometry},
  volume={8},
  number={1},
  pages={26--53},
  year={2024},
  publisher={SIAM}
}

@book{westervelt2018feedback,
  title={Feedback control of dynamic bipedal robot locomotion},
  author={Westervelt, Eric R and Grizzle, Jessy W and Chevallereau, Christine and Choi, Jun Ho and Morris, Benjamin},
  year={2018},
  publisher={CRC press}
}

@article{bloch1995nonholonomic,
  title={Nonholonomic control systems on Riemannian manifolds},
  author={Bloch, Anthony M and Crouch, Peter E},
  journal={SIAM Journal on Control and Optimization},
  volume={33},
  number={1},
  pages={126--148},
  year={1995},
  publisher={SIAM}
}

@book{isidori1985nonlinear,
  title={Nonlinear control systems: an introduction},
  author={Isidori, Alberto},
  year={1985},
  publisher={Springer}

}

@book{jurdjevic1997geometric,
  title={Geometric control theory},
  author={Jurdjevic, Velimir},
  year={1997},
  publisher={Cambridge university press}
}

@article{moran2023gymnastics,
  title={From Gymnastics to Virtual Nonholonomic Constraints: Energy Injection, Dissipation, and Regulation for the Acrobot},
  author={Moran-MacDonald, Adan and Maggiore, Manfredi and Wang, Xingbo},
  journal={IEEE Transactions on Control Systems Technology},
  year={2023}
}

@article{stratoglou2024virtualc,
  title={Virtual constraints on {R}iemannian homogeneous spaces},
  author={Stratoglou, Efstratios and Simoes, Alexandre Anahoy and Bloch, Anthony and Colombo, Leonardo},
  journal={IFAC-PapersOnLine},
  volume={58},
  number={6},
  pages={77--82},
  year={2024},
  publisher={Elsevier}
}

@inproceedings{stratoglou2023bvirtual,
  title={Virtual Affine Nonholonomic Constraints},
  author={Stratoglou, Efstratios and Simoes, Alexandre Anahory and Bloch, Anthony and Colombo, Leonardo},
  booktitle={International Conference on Geometric Science of Information},
  pages={89--96},
  year={2023},
  organization={Springer}
}

@article{stratoglou2023geometry,
  title={On the Geometry of Virtual Nonlinear Nonholonomic Constraints},
  author={Stratoglou, Efstratios and Simoes, Alexandre Anahory and Bloch, Anthony and Colombo, Leonardo J},
  journal={arXiv preprint arXiv:2310.01849},
  year={2023}
}

@book{moran2021energy,
  title={Energy injection for mechanical systems through the method of Virtual Nonholonomic Constraints},
  author={Moran-MacDonald, Adan},
  year={2021},
  publisher={University of Toronto (Canada)}
}

@inproceedings{griffin2015nonholonomic,
  title={Nonholonomic virtual constraints for dynamic walking},
  author={Griffin, Brent and Grizzle, Jessy},
  booktitle={2015 54th IEEE Conference on Decision and Control (CDC)},
  pages={4053--4060},
  year={2015},
  organization={IEEE}
}

@article{westervelt2003hybrid,
  title={Hybrid zero dynamics of planar biped walkers},
  author={Westervelt, Eric R and Grizzle, Jessy W and Koditschek, Daniel E},
  journal={IEEE transactions on automatic control},
  volume={48},
  number={1},
  pages={42--56},
  year={2003},
  publisher={IEEE}
}

@article{chevallereau2003rabbit,
  title={Rabbit: A testbed for advanced control theory},
  author={Chevallereau, Christine and Abba, Gabriel and Aoustin, Yannick and Plestan, Franck and Westervelt, Eric and De Wit, Carlos Canudas and Grizzle, Jessy},
  journal={IEEE Control Systems Magazine},
  volume={23},
  number={5},
  pages={57--79},
  year={2003}
}

@article{freidovich2008periodic,
  title={Periodic motions of the pendubot via virtual holonomic constraints: Theory and experiments},
  author={Freidovich, Leonid and Robertsson, Anders and Shiriaev, Anton and Johansson, Rolf},
  journal={Automatica},
  volume={44},
  number={3},
  pages={785--791},
  year={2008},
  publisher={Elsevier}
}

@article{mohammadi2018dynamic,
  title={Dynamic virtual holonomic constraints for stabilization of closed orbits in underactuated mechanical systems},
  author={Mohammadi, Alireza and Maggiore, Manfredi and Consolini, Luca},
  journal={Automatica},
  volume={94},
  pages={112--124},
  year={2018},
  publisher={Elsevier}
}

@book{helgason1979differential,
  title={Differential geometry, Lie groups, and symmetric spaces},
  author={Helgason, Sigurdur},
  year={1979},
  publisher={Academic press}
}

@article{canudas2004concept,
  title={On the concept of virtual constraints as a tool for walking robot control and balancing},
  author={Canudas-de-Wit, Carlos},
  journal={Annual Reviews in Control},
  volume={28},
  number={2},
  pages={157--166},
  year={2004},
  publisher={Elsevier}
}

@book{bullo2019geometric,
  title={Geometric control of mechanical systems: modeling, analysis, and design for simple mechanical control systems},
  author={Bullo, Francesco and Lewis, Andrew D},
  volume={49},
  year={2019},
  publisher={Springer}
}

@book{boothby1986introduction,
  title={An introduction to differentiable manifolds and Riemannian geometry},
  author={Boothby, William M},
  year={1986},
  publisher={Academic press}
}

@article{Consol:Costal:Maggiore,
    author = {Luca Consolini and Alessandro Costalunga and Manfredi Maggiore} ,
    title = {A coordinate-free theory of virtual holonomic constraints},
    journal = {Journal of Geometric Mechanics},
    year = {2018},
    volume = {10},
    number = {4},
    pages = {467-502},
    issn = {1941-4889},
    doi = {10.3934/jgm.2018018},
}

@article{Consol:Constal,
  author={Consolini, Luca and Costalunga, Alessandro},
  booktitle={2015 54th IEEE Conference on Decision and Control (CDC)}, 
  title={Induced connections on virtual holonomic constraints}, 
  year={2015},
  volume={},
  number={},
  pages={139-144},
  doi={10.1109/CDC.2015.7402099}
}

@article{rojo2010rolling,
  title={The rolling sphere, the quantum spin, and a simple view of the Landau--Zener problem},
  author={Rojo, Alberto G and Bloch, Anthony M},
  journal={American Journal of Physics},
  volume={78},
  number={10},
  pages={1014--1022},
  year={2010},
  publisher={AIP Publishing}
}

\end{document}